\DeclareMathOperator{\tr}{\rm{tr}}
\DeclareMathOperator{\Var}{Var}
\DeclareMathOperator{\Cov}{Cov}
\newcommand{\rd}{{\mathrm d}}
\newcommand{\T}{^\top}
\newcommand{\calL}{{\cal L}}
\newcommand{\calN}{{\cal N}}
\newcommand{\calM}{{\cal M}}
\newcommand{\argmax}{\operatornamewithlimits{argmax}}
\newcommand{\argmin}{\operatornamewithlimits{argmin}}
\newtheorem{assumption}{Assumption}
\newcommand{\Pb}{\mathbb{P}}
\newcommand{\KL}[2]{\mathcal{D}_{KL}\left({#1}\parallel {#2}\right)}
\newcommand{\D}[3]{\mathcal{D}_{#1}\left({#2}\parallel {#3}\right)}
\newcommand{\Rb}{\mathbb{R}}
\newcommand{\Eb}{\mathbb{E}}
\newcommand{\ExP}[2]{\Eb_{{#1}}{\left[#2\right]}}
\DeclarePairedDelimiter\abs{\lvert}{\rvert}%
\begin{document}

\title{Sampling-Based Optimization for Multi-Agent Model Predictive Control}

\author{\name Ziyi Wang \email ZiyiWang@gatech.edu
       \AND
       Augustinos D. Saravanos \email asaravanos@gatech.edu
       \AND
      Hassan Almubarak \email halmubarak@gatech.edu
      \AND
      Oswin So \email
      oswinso@gatech.edu
      \AND
      \name Evangelos A.  Theodorou \email evangelos.theodorou@gatech.edu \\
      \addr Autonomous Control and Decision Systems Lab \\
      \addr School of Aerospace Engineering\\
      Georgia Institute of Technology, GA 30318, USA}

\editor{Kevin Murphy and Bernhard Sch{\"o}lkopf}

\maketitle

\begin{abstract}
We systematically review the Variational Optimization, Variational Inference and Stochastic Search perspectives on sampling-based dynamic optimization and discuss their connections to state-of-the-art optimizers and Stochastic Optimal Control (SOC) theory. A general convergence and sample complexity analysis on the three perspectives is provided through the unifying Stochastic Search perspective. We then extend these frameworks to their distributed versions for multi-agent control by combining them with consensus Alternating Direction Method of Multipliers (ADMM) to decouple the full problem into local neighborhood-level ones that can be solved in parallel. Model Predictive Control (MPC) algorithms are then developed based on these frameworks, leading to fully decentralized sampling-based dynamic optimizers. The capabilities of the proposed algorithms framework are demonstrated on multiple complex multi-agent tasks for vehicle and quadcopter systems in simulation. The results compare different distributed sampling-based optimizers and their centralized counterparts using unimodal Gaussian, mixture of Gaussians, and stein variational policies. The scalability of the proposed distributed algorithms is demonstrated on a 196-vehicle scenario where a direct application of centralized sampling-based methods is shown to be prohibitive.
\end{abstract}



\section{Introduction}
\label{sec: introduction}

Stochastic optimization is a family of methods that relies on sampling of random variables to solve optimization problems. For deterministic problems, the decision variables are generated from a sampling distribution, which is updated through a weighted average of the samples. For stochastic problems, stochasticity can also arise in the problem formulation from the objective function or constraints, and realizations of problem scenarios are sampled. There are two main ingredients for sampling-based optimizers: the sampling distribution and evaluation criterion. The sampling distribution for decision variables determines the region to be explored, while the evaluation criterion measures the importance of each sample as a function of its cost and is often designed through heuristics and metaheuristics. 

The choice of sampling distribution and evaluation criterion depend on the theoretical foundation behind each algorithm. Simulated annealing \citep{kirkpatrick1983optimization} is an algorithm inspired by thermodynamics and samples one candidate solution around the current iterate through random perturbation. The probability of transition, which is a function of the energy/cost of the candidate solution, determines whether to use the candidate solution for the next iteration. Another class of methods are the Genetic algorithms \cite{mitchell1998introduction}, which mimic the evolutionary process, encode the decision variable with genomes, and sample a population in the genome space, while the cost of each sample is used to evolve the population. The \ac{CEM} was initially developed for rare event estimation and has later been applied to optimization problems. In \ac{CEM}, samples are generated from distributions of the exponential family, and an elite subset of samples with cost below a threshold is used to update the distribution. Recently, \cite{zhou2014gradient} proposed the general \ac{SS} algorithm with the update law derived through gradient descent of the transformed cost function with respect to the parameters of the sampling distribution from the exponential family. Algorithms such as \ac{CEM} can be recovered as a special case of \ac{SS} with specific choices of the cost transform.


Sampling-based methods have also been widely used in dynamic optimization, where the optimization problem is defined for a dynamical system over a certain period of time. Similar to static optimization, sampling-based dynamic optimizers are derived from different disciplines. Genetic algorithms \citep{michalewicz1990genetic}, \ac{CEM} \citep{kobilarov2012cross}, and \ac{SS} \citep{wang2019information} have been directly applied to dynamic optimization. On the other hand, \ac{MPPI} \citep{williams2016aggressive} is a popular dynamic optimizer derived through variational optimization with connections to the well-known Hamilton-Jacobi-Bellman theory in stochastic optimal control. Variational Inference \ac{MPC} \citep{okada2020variational} is another variational scheme that reformulates control problems as a Bayesian inference ones. These methods have enjoyed much success recently with applications to autonomy \citep{williams2016aggressive, williams2018best}, manipulation \citep{nagabandi2020deep} and even social dynamics \citep{wang2017variational, wang2022opinion}. They have also been extended to handle constrained \citep{boutselis2020constrained} and risk sensitive \citep{wang2021adaptive} problem formulations. Compared to their gradient-based counterparts, sampling-based optimizers have been shown to be more robust and less prone to undesirable local minima \citep{williams2017model}. In addition, thanks to their capabilities to handle nonsmooth cost and dynamics, they are widely applicable to a variety of systems and tasks \citep{williams2016aggressive, nagabandi2020deep, evans2021stochastic}. On the other hand, the sampling nature of this type of optimization frameworks makes it challenging to scale to high dimensional problems such as large scale multi-agent control. This is due to the fact that as the number of agents increases, it becomes increasingly hard to sufficiently explore the state space with the sampled trajectories.

The issue of scalability can be surpassed by deploying distributed optimization techiniques. One such technique is \ac{ADMM} \citep{boyd2011distributed}, which relies on decomposing high-dimensional problems into smaller subproblems that can be solved separately. Consensus \ac{ADMM} is a particular ADMM variation which relies on enforcing a consensus between the sub-problems and is readily applicable to multi-agent control problems with separable structure. Recent works such as \citep{saravanos2021distributed, halsted2021survey, tang2021fast, pereira2022decentralized, saravanos2022distributed} have utilized consensus \ac{ADMM} to propose decentralized schemes that are scalable and computationally efficient for multi-agent control. ADMM has also been used in the \ac{MPC} setting for deterministic multi-agent coordination and collision avoidance \citep{lu2014separable, rey2018fully, cheng2021admm}. Nevertheless, to the best of our knowledge, prior \ac{ADMM}-based methods for multi-agent control have not considered sampling-based approaches. 

In this paper, we systematically review different sampling-based dynamic optimization methods, provide a unified analysis, and propose a distributed sampling-based dynamic optimization framework for multi-agent control. Our contributions are as follows:

\begin{enumerate}
\item We review three different perspectives on sampling-based dynamic optimization, namely Stochastic Search, Variational Optimization and Variational Inference. For each perspective, we consider the unimodal Gaussian, Gaussian mixture, and Stein variational policy distributions.
The connections between the perspectives and state-of-the-art algorithms are discussed in Section \ref{sec: sampling_opt}. 
\item We provide a unified convergence and sample complexity analysis of the different perspectives in Section \ref{sec: analysis}. The sample complexity analysis also allows for the comparison of the variance of approximaton error between perspectives.
\item We propose a distributed \ac{MPC} framework that uses consensus \ac{ADMM} to scale up general sampling-based dynamic optimizers in Section \ref{sec: distributed_opt}. The distributed optimization formulation decouples the multi-agent problem into neighborhood-level ones, allowing for full decentralization. Furthermore, the resulting \ac{MPC} algorithm can adapt the neighborhoods at each timestep.
\item The analysis and the proposed distributed framework is tested in simulation in Section \ref{sec: results}. The variance comparison analysis is verified on a single-agent point-mass system. Subsequently, a comparison of performance for different optimizers and policy distributions are carried out on 4 different multi-vehicle and multi-quadcopter tasks. Finally, we demonstrate the increased scalability and superior performance of the distributed framework against the centralized one through a series of experiments that scale up to 196 vehicles. 
\end{enumerate}

\section{Different Perspectives in Sampling-based Dynamic Optimization}
\label{sec: sampling_opt}

In this section, we review three perspectives on sampling-based nonlinear dynamic optimization, namely \ac{VO}, \ac{VI}, and \acl{SS}. Consider the discrete-time dynamic optimization problem over time horizon $T$:
\begin{equation} \label{eq: dyn_opt}
\begin{split}
U^* & = \argmin_U J(\tau)\\
\text{s.t.}\hspace{3mm} \ & x_{t+1} = F(x_t, u_t)\\
& x_0 = \bar{x}_0,
\end{split}
\end{equation}
where $\tau = \{X, U\}$ with $X=\{x_1, x_2, \cdots, x_T\}\in\Rb^{n_x\times T}$, $U=\{u_0, u_1, \cdots, u_{T-1}\}\in\Rb^{n_u\times T}$, and $\bar{x}_0$ is the initial state. Upper case letters denote a trajectory of variables over the same time horizon unless otherwise noted. We overload the definition of $J$ for the state cost, control cost, and the total cost of the problem, as well as the cost at each timestep. Sampling-based optimization assumes that the controls can be sampled from a distribution $u_t \sim q(u_t;\theta_t)$ with distribution parameters $\theta_t\in\Rb^{n_p}$. Different perspectives provide different interpretations  on the same optimization problem, leading to different algorithmic behaviors and application scenarios.

\subsection{Kullback-Leibler Variational Optimization Perspective}
\label{sec: kl_vo}
The \ac{VO} approach \citep{williams2017information} leverages the \textit{free energy} of the system defined as
\begin{equation} \label{eq: free_energy}
\mathcal{F}(J(X)) = -\lambda\log\left(\mathbb{E}_{p(\tau)}\left[\exp\big(-\frac{1}{\lambda}J(X)\big)\right]\right) = \min_q \mathbb{E}_{q(\tau)}[J(X)] + \lambda \KL{q(\tau)}{p(\tau)},
\end{equation}
where $\lambda$ is the inverse temperature and $\KL{\cdot}{\cdot}$ is the KL divergence. Here $p(\tau)$ and $q(\tau)$ define the probability distributions of trajectories for uncontrolled and controlled dynamics respectively. Since the dynamics is deterministic, we have $q(\tau)=q(U)=\prod_{t=0}^{T-1} q(u_t;\theta_t)$. The optimization on the right hand side admits a closed form solution in the form of Gibbs distribution as follows
\begin{equation} \label{eq: vo_update}
q^*(U) = \frac{\exp(-\frac{1}{\lambda} J(X))p(U)}{\Eb_{p(U)} [\exp(-\frac{1}{\lambda} J(X))]}.
\end{equation}
An iterative scheme can be derived by minimizing the KL divergence between the optimal distribution and the controlled distribution, i.e. $\KL{q^*(U)}{q(U;\Theta)}$, leading to an update law similar to that of the \ac{MPPI} algorithm \citep{williams2016aggressive}. A full derivation is included in \Cref{appendix: KL_VO}.

\subsubsection{Renyi Variational Optimization}
\label{sec: renyi_vo}
While the KL \ac{VO} approach results in a policy update law of the form in \Cref{tab: perspective_comparison}, the update law does not incorporate the normalization of the cost function in the \ac{MPPI} update, which was added as a necessary heuristic to robustify computation \citep{theodorou2010generalized, williams2018information}. Here we show that when using the $\alpha-$Renyi divergence \citep{renyi1961measures} instead of the KL one, the normalization of the cost function arises in a natural way by using the proper choice of the $\alpha$ parameter and making it adaptive. Starting from the Renyi divergence definition and following the steps of the \ac{VO} approach we have: 
\begin{align}
    \Theta^{*} &= \argmin {\cal{D}}_{\alpha}\big(q^{*}(U) || q(U;\Theta) \big) = \frac{1}{\alpha-1}\log\Eb_{q^*}\left[\left(\frac{q^*}{q}\right)^{\alpha-1}\right],
    \label{eq:Renyi}
\end{align}
where $ \alpha > 0  $ and $ \lim_{\alpha \to 0}{\cal{D}}_{\alpha}   =  {\cal{D}}_{KL}$. By taking the gradient of $ {\cal{D}}_{\alpha} $ and following an importance sampling procedure and change of measure step, one can derive the iterative scheme:
\begin{equation}
    \Theta^{k+1}  =  \frac{\Eb_{q^k}\left[ \left( \exp\left(-\frac{1}{\lambda} J(\tau)\right) \right)^{\alpha} U \right]}{\Eb_{q^k}\left[\left( \exp\left(-\frac{1}{\lambda} J(\tau)\right) \right)^{\alpha} \right]}.
\end{equation}
The derivation can be found in \Cref{appendix: Renyi_VO}. By defining the $ \alpha $ parameter in the Renyi divergence as $ \alpha = \frac{1}{\max J - \min J}$ one can derive the MPPI update law with the normalization of the cost $ J $ to  range $[0,1]$.

\subsubsection{Connections to Hamilton-Jacobi-Bellman Theory}
\label{sec: connection_hjb}
In this section, we demonstrate the connection between the \ac{VO} approach and \ac{SOC} theory. In particular, we show equivalence between the free energy and the value function. To facilitate the analysis, we work on a continuous time \ac{SOC} formulation. Consider the following standard \ac{SOC} problem
\begin{equation}\label{eq: soc_problem}
\begin{split}
u^* &= \argmin_u \underbrace{\Eb\left[\int_0^T (Q(x(t)) + \frac{1}{2}u\T Ru)\rd t + Q_T(x(T))\right]}_{J(x(t),u)}\\
\rd x &= A(x) \rd t + B(x)(u\rd t + \sqrt{\lambda}\rd w),
\end{split}
\end{equation}
where $\lambda$ is the inverse temperature as defined earlier and $w(t):\Rb\rightarrow \Rb^{n_w}$ is a multivariate Brownian motion. Despite the quadratic control cost and affine control assumption, many problems in \ac{SOC} and robotics can be represented in this form. The dynamics can be discretized as
\begin{equation}\label{eq: discretized_dynamics}
x_{t+1} = x_t + A(x_t) \delta t + B(x_t)(u_t \delta t + \sqrt{\lambda} \delta w_t) = F(x_t, u_t + \delta w_t).
\end{equation}

We can define the value function as
\begin{equation}
V(x(t),t) = \inf_u J(x(t),u), \quad V(x(T), T) = Q_T(x(T)).
\end{equation}
Using stochastic Bellman's principle \citep{yong1999stochastic}, the value function's solution can be shown to satisfy the Hamilton-Jacobi-Bellman equation
\begin{equation}\label{eq: hjb}
\partial_t V + \frac{1}{2}\lambda\tr\left(\nabla_{xx}VBB\T\right) + \nabla_x V\T A + Q - \frac{1}{2}\nabla_xV\T B R^{-1} B\T\nabla_x V = 0.
\end{equation}
\Cref{eq: hjb} is a semi-linear PDE due to the last term being nonlinear with respect to $\nabla_x V$. We choose $R=I$ and introduce $\psi(x,t)=\exp(-\frac{1}{\lambda}V(x,t))$ to get
\begin{equation}\label{eq: desirability}
\partial_t \psi + \frac{1}{2}\lambda\tr\left(\nabla_{xx}\psi B B\T\right) + \nabla_x \psi\T A - \frac{1}{\lambda} \psi Q = 0,
\end{equation}
with
\begin{equation*}
\begin{split}
\partial_t V &= -\lambda \frac{1}{\psi} \partial_t \psi\\
\nabla_x V &= -\lambda \frac{1}{\psi} \nabla_x \psi\\
\nabla_{xx} V &= -\lambda \frac{1}{\psi^2} \nabla_x \psi \nabla_x \psi\T - \lambda \frac{1}{\psi} \nabla_{xx} \psi.
\end{split}
\end{equation*}
The transformed PDE in \cref{eq: desirability} is linear with respect to $\psi$. The Feynman-Kac lemma \citep{friedman2010stochastic} states that for the linear PDE in \cref{eq: desirability}, the solution can be written as
\begin{equation}\label{eq: feyman_kac_solution}
\psi(x, t) = \Eb\left[\exp\left(\int_0^T -\frac{1}{\lambda}(Q(x(t)) + \frac{1}{2}u\T u) \rd t\right)\right]=\Eb\left[\exp\left(-\frac{1}{\lambda}J\right)\right],
\end{equation}
where the second equality denotes time discretization. From the definition of free energy and $\psi$, we can observe the equivalence between free energy and value function.

\subsection{Variational Inference Perspective}
\label{sec: vi}

The \ac{VI} perspective \citep{okada2020variational} takes a Bayesian approach to the dynamic optimization problem. The goal of the inference problem is to sample from the posterior distribution
\begin{equation} \label{eq: vi_bayes}
\begin{split}
p(\tau|o=1) &= \frac{p(o=1|\tau)p(\tau)}{p(o=1)}\\
&= \frac{p(o=1|\tau)p(U)}{p(o=1)},
\end{split}
\end{equation}
where $o\in\{0,1\}$ is a dummy variable that indicates optimality with $o=1$. The \ac{VI} objective can be formulated by minimizing the KL divergence between the controlled distribution and the posterior distribution, i.e. $\KL{q(\tau)}{p(\tau|o=1)}$. Using \cref{eq: vi_bayes}, the objective can be transformed to
\begin{equation} \label{eq: vi_obj}
q^*(U) = \argmin_{q} -\Eb_{q(U)}[\log p(o=1|\tau)] + \KL{q(U)}{p(U)},
\end{equation}
where $p(o=1|\tau)$ is the optimality likelihood and can be parameterized by a non-increasing function of the trajectory cost as $p(o=1|\tau)\coloneqq l(J(\tau))$. In this paper, we choose $l(x)=\exp(-\frac{1}{\lambda}x)$ such that $\log p(o=1|\tau)=-\lambda^{-1}J(\tau)$ and the objective becomes
\begin{equation} \label{eq: vi_obj_final}
q^*(U) = \argmin_{q} \lambda^{-1}\Eb_{q(U)}[J(\tau)] + \KL{q(U)}{p(U)}.
\end{equation}
Note that the original objective \eqref{eq: vi_obj} is a generalization of the \ac{VO} objective \eqref{eq: free_energy}.

\subsubsection{Varational Inference Using Tsallis Divergence}
\label{sec: tsallis_vi}

\begin{figure}[t]
    \centering
    \includegraphics[width=0.9\linewidth]{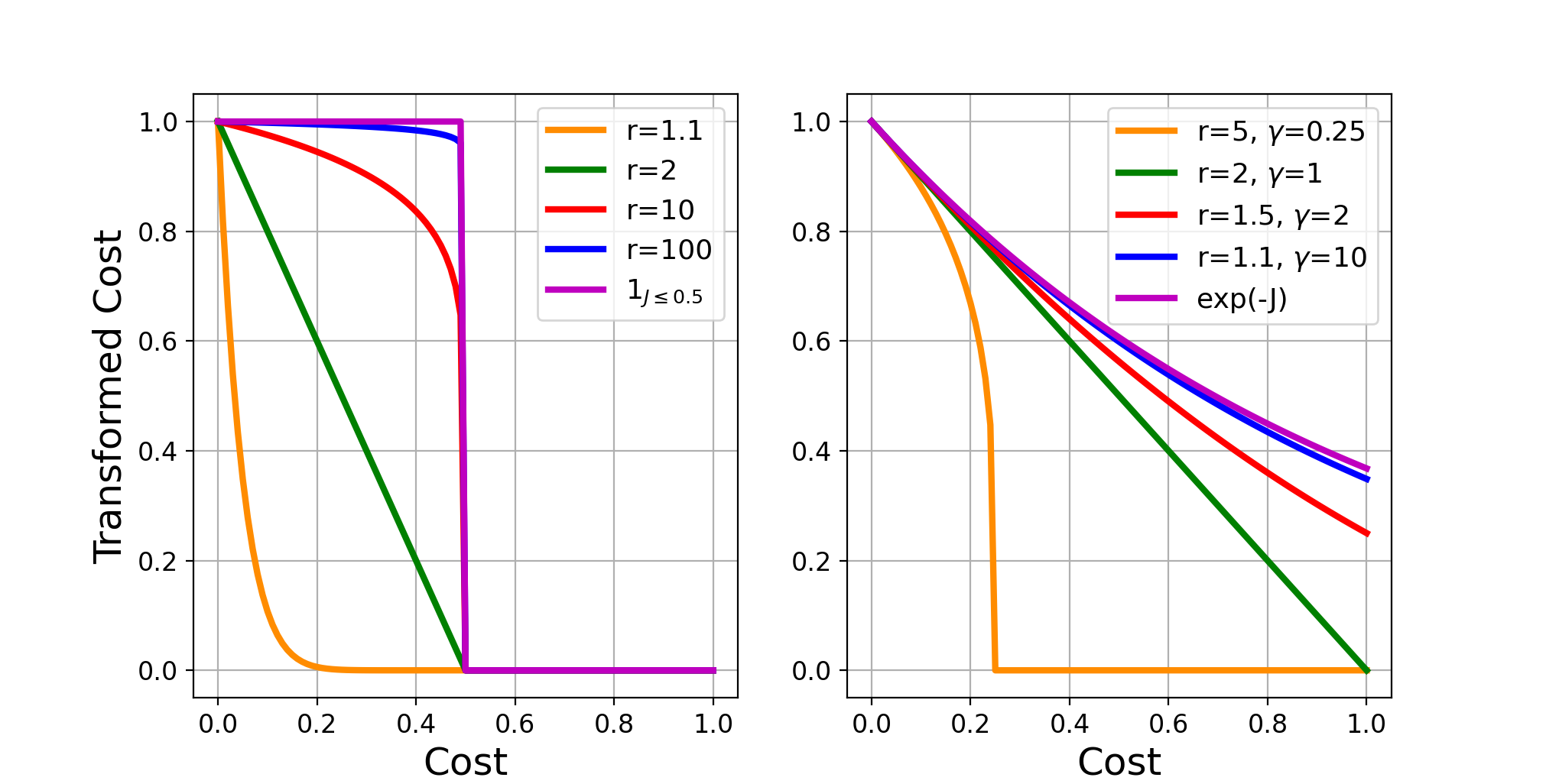}
    \caption{Reparameterized $\exp_r$ \eqref{eq:reparameterization}. \textit{Left}: with $\gamma=0.5$, $\exp_r$ approaches the indicator function $\bm{1}_{\{J\leq 0.5\}}$ as $r\rightarrow \infty$. \textit{Right}: $\exp_r$ approaches the negative exponential function $\exp(-J)$ with appropriate choices of $\gamma$ as $r\rightarrow 1$.}
    \label{fig:exp_r}
\end{figure}

The \ac{TVI} approach \citep{wang2021variational} further generalizes this formulation by using the Tsallis divergence \citep{tsallis1988possible} instead of the KL divergence as the regularizer
\begin{equation}\label{eq: tvo_obj}
q^*(U) = \argmin_q \lambda^{-1}\Eb_{q(U)}[J(\tau)] + \D{r}{q(U)}{p(U)},
\end{equation}
where
\begin{equation*}
\D{r}{q(x)}{p(x)} \coloneqq  \Eb_q\left[\log_r\frac{q(x)}{p(x)}\right], \hspace{3mm} \log_r(x) = \frac{x^{r-1}-1}{r-1} , \hspace{3mm}
\exp_r(x) = (1+(r-1)x)_+^{\frac{1}{r-1}},
\end{equation*}
with $(\cdot)^+=\max(0, \cdot)$. Here the deformed exponential and logarithmic functions $\exp_r$ and $\log_r$ are generalizations of the standard $\exp$ and $\log$ for $r>1$. We have $\exp_r\rightarrow\exp$ and $\log_r\rightarrow\log$ with $r\rightarrow 1$. Similarly, the KL divergence is recovered from Tsallis divergence $\mathcal{D}_r$ as $r\rightarrow 1$. The optimal control distribution can be computed in closed-form (detailed in \Cref{appendix: Tsallis_VI} as
\begin{equation}\label{eq:tsallis_vi_soc:dist_update}
q^*(U)
=\frac{\exp_r \left( -\lambda^{-1} J\right)p(U)} {\Eb_{p(U)}\left[\exp_r \left( -\lambda^{-1} J\right)\right]}.
\end{equation}

\subsubsection{Reparameterized Tsallis Variational Inference}
\label{sec: reparameterized_tsallis}
We can reparameterize the $\exp_r$ term in \cref{eq:tsallis_vi_soc:dist_update} as $\lambda=(r-1)\gamma$ and get
\begin{equation}
\label{eq:reparameterization}
\exp_r(-\lambda^{-1} J) = \left(1-\frac{J}{\gamma}\right)_+^{\frac{1}{r-1}}
= \begin{cases}
\exp\left(\frac{1}{r-1}\log \left(1-\frac{J}{\gamma}\right)\right), \, J<\gamma \\
0, \hspace{9.6 em} J \geq \gamma
\end{cases},
\end{equation}
where $\gamma$ is now the threshold beyond which the optimality weight is set to 0.
The reparameterization adjusts the original parameter $\lambda$ at every iteration to maintain the same threshold $\gamma$, which is a more intuitive parameter.
In addition, the reparameterized framework is easier to tune and achieves better performance than the original formulation \citep{wang2021variational}. Therefore, we focus our analysis and simulations only on the reparameterized version hereon after. Note that in practice, it is easier to define an \textit{elite fraction}, which adjusts $\gamma$ based on the scale of the costs, instead of using $\gamma$ for easier tuning. Using elite fraction achieves a similar effect to the normalization heuristic of \ac{MPPI}.

\Cref{fig:exp_r} illustrates the cost function transformation corresponding to different $r$ and $\gamma$ values.
For $J < \gamma$, as $r \rightarrow \infty$, $\left(1-\frac{J}{\gamma}\right)_+^{\frac{1}{r-1}} \rightarrow 1$.
Hence, $\left(1-\frac{J}{\gamma}\right)_+^{\frac{1}{r-1}}$ converges pointwise to the step function $\bm{1}_{ \{J \leq \gamma\} }$ with $r \to \infty$.
On the other hand, since $\exp_r\rightarrow\exp$ as $r\rightarrow 1$, $\left(1-\frac{J}{\gamma}\right)_+^{\frac{1}{r-1}}\rightarrow \exp(-\lambda J)$ with $r\rightarrow 1$ and $\gamma=\frac{\lambda}{r-1}$.

\subsection{Stochastic Search Perspective}
\label{sec: ss}
The \ac{SS} perspective \citep{boutselis2020constrained, wang2021adaptive} is grounded on stochastic optimization theory and optimizes with respect to the relaxed objective $\Eb[J(\tau|\Theta)]$ under the sampled control distribution assumption. To facilitate the algorithmic development the objective is transformed into 
\begin{equation} \label{eq: ss_obj}
\Theta^* = \arg\max_\Theta \ln\left(\mathbb{E}\left[S\left(- J(\tau)\right)\right]\right),
\end{equation}
where $S$ is a non-decreasing shape function. Common shape functions include: 1) the exponential function, $S(x;\lambda)=\exp(\frac{1}{\lambda} x)$, which leads to the same update law as the KL \ac{VO} approach and the unnormalized \ac{MPPI} algorithm; 2) the deformed exponential function, $S(x;\lambda, r)=\exp_r(\frac{1}{\lambda} x)$, which leads to the \ac{TVI} update law; 3) the sigmoid function, $S(x; \kappa, \varphi)=\tfrac{1}{1+\exp(-\kappa(x-\varphi))}$, where $\varphi$ is the ($1-\rho$)-quantile, which results in an update law similar to \ac{CEM} but with soft elite threshold $\rho$. Note that the optimization is performed with respect to the distribution parameters $\Theta$ as opposed to the distribution itself in the variational schemes. Assume that the control distribution is of the exponential family such that its pdf is of the form
\begin{equation}\label{eq: exponential_family_pdf}
q(u_t;\theta_t) = h(u_t)\exp\Big(\eta(\theta_t)^\mathrm{T}T(u_t)-A(\theta_t)\Big),
\end{equation}
with $\eta$ being the natural parameters of the distribution and $T(u)$ being the sufficient statistics of $u$. The update law can be derived by performing gradient ascent on the objective \eqref{eq: ss_obj} with respect to the natural parameters as
\begin{equation} \label{eq: ss_natural_param_update}
\eta_t^{k+1} 
= \eta_t^k + \alpha^k\Bigg(\frac{\mathbb{E}\Big[S(-J(\tau)) (T(u_t) - \Eb[T(u_t)]) \Big]}{\mathbb{E} \Big[S(-J(\tau))\Big]}\Bigg),
\end{equation}
and then transforming it to the distribution parameters. We refer the readers to \Cref{appendix: SS} for the detailed derivation.

\subsection{Policy Distribution}
\label{sec: policy}
For the \ac{VO} and \ac{VI} approaches, it is computationally inefficient to sample from $q^*(U)$ through \eqref{eq: vo_update} and \eqref{eq:tsallis_vi_soc:dist_update}. Instead, $q^*(U)$ can be approximated iteratively through a tractable distribution $q(U)$ by minimizing the KL divergence between $q$ and $q^*$
\begin{equation} \label{eq:update_laws:obj_fn}
q^{k+1}(U) = \argmin_{q} \KL{q^*(U)}{q(U)} .
\end{equation}
However, because one can only evaluate $q^*(U)$ at a finite number of points $\{ U^{(m)} \}_{m=1}^M$, we instead approximate $q^*(U)$ by the empirical distribution $\hat{q}^*(U)$ with weights $w^{(m)}$:
\begin{align}
    \hat{q}^*(U) = \sum_{m=1}^M w^{(m)} \bm{1}_{\{U = U^{(m)} \}}, \\
    w^{(m)} = \frac{q^*(U^{(m)})}{\sum_{m'=1}^M q^*(U^{(m')}) } .
\end{align}

We now solve \cref{eq:update_laws:obj_fn} for 3 different policy classes: unimodal Gaussian, Gaussian mixture. and a nonparametric policy corresponding to \ac{SVGD} \citep{lambert2020stein, liu2016stein}. The detailed derivation for each policy distribution is in \Cref{appendix: policy}.

\subsubsection{Unimodal Gaussian} For a unimodal Gaussian policy distribution with parameters $\Theta\coloneqq\{ (\mu_t, \Sigma_t) \}_{t=0}^{T-1}$, the update laws for the $(k+1)$-th iteration take the form of
\begin{subequations}
\begin{align} 
\mu^{k+1}_t &=
    \sum_{m=1}^M w^{(m)} u_t^{(m)}, \label{eq:mean_update} \\
\Sigma^{k+1}_t &=
    \sum_{m=1}^M w^{(m)} (u_t^{(m)}-\mu_t^{k+1})(u_t^{(m)}-\mu_t^{k+1})\T \label{eq:var_update} .
\end{align}
\end{subequations}

\subsubsection{Gaussian Mixture} Alternatively, the policy distribution can be an $L$-mode mixture of Gaussian distributions with parameters $\Theta \coloneqq \{ \Theta_l \}_{l=1}^{L}$ with $\Theta_l \coloneqq (\phi_l, \{ \mu_{l, t}, \Sigma_{l, t} \}_{t=0}^{T-1})$, where $\phi_l$ is the mixture weight for the $l$-th component.
Although it is not possible to directly solve \cref{eq:update_laws:obj_fn} in this case, we draw from \ac{EM} to derive an iterative update scheme for the $(k+1)$-th iteration with the form
\begin{subequations}
\begin{align}
\phi_l^{k+1} &= \frac{N_l}{\sum_{l'=1}^L N_{l'}},\\
\mu_{l,t}^{k+1} &= \frac{1}{N_l}\sum_{m=1}^M \eta_l(u_t^{(m)}) w^{(m)} u_t^{(m)},\\
\Sigma_{l,t}^{k+1} &= \frac{1}{N_l}\sum_{m=1}^M \eta_l(u_t^{(m)}) w^{(m)} (u^{(m)}_t-\mu_{l,t}^{k+1})(u^{(m)}_t-\mu_{l,t}^{k+1})\T,
\end{align}
\end{subequations}
where
\begin{align*}
\eta_l(u_t^{(m)}) &= \frac{\phi_l^k\calN(u_t^{(m)}|\mu_{l,t}^k, \Sigma_{l,t}^k)}{\sum_{l'=1}^L \phi_{l'}^k \calN(u_t^{(m)} | \mu_{l',t}^k, \Sigma_{l',t}^k)},\\
N_{l} &= \sum_{m=1}^M \sum_{t=0}^{T-1} \eta_l(u_t^{(m)}) w^{(m)}.
\end{align*}

\subsubsection{Stein Variational Policy}
The policy can also be a non-parametric distribution approximated by a set of particles $\Theta\coloneqq\{\Theta_l\}_{l=1}^L$ for some parametrized policy $\hat{q}(U; \Theta)$. In \cite{lambert2020stein}, $\hat{q}$ is taken to be a unimodal Gaussian with fixed variance with $\Theta$ corresponding to the mean.
The update law of each Stein particle for the $(k+1)$-th iteration has the form
\begin{subequations}
\begin{align}
    \Theta_l^{k+1}
        &= \Theta_l^{k} + \epsilon \hat{\phi}^*(\Theta_l^k), \\
    \hat{\phi}^*(\Theta)
        &= \sum_{l=1}^L \hat{k}(\Theta_{l}, \Theta) G(\Theta_l) + \nabla_{\Theta_l} \hat{k}(\Theta_l, \Theta), \\
    G(\Theta_l) &= \frac{\sum_{s=1}^S w^{(l, s)} \nabla_\Theta \log \hat{\pi}(U^{(m)}, \Theta_{l})}{\sum_{s=1}^S w^{(l, s)}} ,
\end{align}
\end{subequations}
where $M$ is chosen such that $M = LS$, $\hat{k}$ is a kernel function, and $w^{(l, s)} = w^{(m+L(l-1))}$, where $S$ denotes the number of rollouts for each of the $L$ particles.
As noted in \cite{zhuo2018message}, \ac{SVGD} becomes less effective as the dimensionality of the particles increases due to the inverse relationship between the repulsion force in the update law and the dimensionality.
Hence, we follow \cite{lambert2020stein} in choosing a sum of local kernel functions as our choice of $\hat{k}$.

\bgroup
\begin{table*}[t]
    \centering
    \caption{Comparison of the minimization objective and the update law for a unimodal Gaussian policy with fixed variance between different dynamic optimization approaches.}
    \begin{tabular}{@{} l l l @{}}
    \toprule
    \multicolumn{1}{c}{Approach} & \multicolumn{1}{c}{Objective (Minimize)} & \multicolumn{1}{c}{Update Law} \\
    \midrule
     Tsallis VI & $\mathbb{E}[J] + \lambda \D{r}{q}{p}$ & $\theta_t^{k+1} = \sum_{m=1}^M\frac{\exp_r \left(-\lambda^{-1}J^m\right) u^m_t }{\sum_{m'=1}^M \exp_r \left(-\lambda^{-1}J^{m'}\right)}$ \\
    \addlinespace[0.3em]
    Reparameterized TVI & $\mathbb{E}[J] + \lambda \D{r}{q}{p}$ & $\theta^{k+1}_t = \sum_{m=1}^M\frac{\left(1-\frac{J^m}{\gamma}\right)^{\frac{1}{r-1}}_+ u^m_t}{\sum_{m'=1}^M \left(1-\frac{J^{m'}}{\gamma}\right)^{\frac{1}{r-1}}_+}$\\
    \addlinespace[0.3em]
    VO/Unnormalized MPPI  & $\mathbb{E}[J] + \lambda \KL{q}{p}$ & $\theta_t^{k+1} = \sum_{m=1}^M \frac{\exp \left(-\lambda^{-1}J^m\right) u^m_t }{\sum_{m'=1}^M \exp \left(-\lambda^{-1}J^{m'}\right)}$ \\
    \addlinespace[0.3em]
    SS & $\Eb[S(-J)]$ & $\theta_t^{k+1} = \theta_t^k + \sum_{m=1}^M\frac{S(J^m)  (u^m_t - \frac{1}{M}\sum_{m''=1}^M( u^{m''}_t)) }{\sum_{m'=1}^M S(J^{m'})}$ \\
    \addlinespace[0.5em]
    CEM & $\Eb[J]$ & $\theta_t^{k+1}=\alpha \theta_t^k + (1-\alpha) \sum_{m=1}^M \frac{\bm{1}_{ \{J^m\leq \gamma\} } u_t^m }{\sum_{m'=1}^M \bm{1}_{ \{J^{m'}\leq \gamma\} }}$\\
    \bottomrule
    \end{tabular}
    \label{tab: perspective_comparison}
\end{table*}
\egroup

\subsection{Connections and Comparisons between Perspectives}
\label{sec: connections}
In this section, we compare between the different perspectives derived and against state-of-the-art sampling-based dynamic optimization algorithms such as \ac{MPPI} and \ac{CEM}. A comparison of problem formulations and update laws for different approaches with a unimodal Gaussian policy is demonstrated in \Cref{tab: perspective_comparison}.

\ac{CEM} \citep{de2005tutorial} is widely used in reinforcement learning and optimal control problems \citep{mannor2003cross, kobilarov2012cross}.
The objective of \ac{CEM} is to minimize the expected cost $\Eb [J]$. The policy update law for \ac{CEM} is based on a heuristic indicator function $\bm{1}_{\{J\leq \gamma\}}$ that averages the samples whose cost is below the elite threshold $\gamma$. In practice, a elite fraction is used instead of the unnormalized elite threshold.


It can be observed that the \ac{VO} and \ac{VI} perspectives share a similar objective function with the expected cost/negative log optimality likelihood and a regularization term. Both perspectives optimize directly with respect to the policy distribution, and the form of optimal policy distribution depends on the regularization term. From \Cref{sec: tsallis_vi} and \Cref{sec: reparameterized_tsallis}, we know that the objective of KL \ac{VO} can be recovered from \ac{TVI} by taking $r\rightarrow 1$, which results in an update law similar to that of \ac{MPPI} with an unnormalized cost. Normalization can be incorporated through the use of Renyi divergence as shown in \Cref{sec: renyi_vo}. On the other hand, the \ac{CEM} update law is a special case of the reparameterized \ac{TVI} by taking $r\rightarrow \infty$ with $\gamma$ corresponding to the unnormalized elite fraction.

\ac{SS} is a general stochastic optimization framework for policy distributions from the exponential family, where optimization is performed with respect to the policy distribution parameters via gradient descent. The form of the update is dictated by the choice of the shape function $S(\cdot)$ that transforms the cost function. As shown in \Cref{tab: perspective_comparison}, the update law of \ac{VO} and \ac{TVI} perspectives, as well as the ones of \ac{MPPI} and \ac{CEM}, can all be recovered from the \ac{SS} update. \ac{MPPI}/\ac{VO} corresponds to $S(x;\lambda)=\exp(\frac{1}{\lambda} x)$. \ac{CEM} update can be derived by taking $S(x; \kappa, \varphi)=(x-x_{\text{lb}})\tfrac{1}{1+\exp(-\kappa(x-\varphi))}$. \ac{TVI} update law and its reparameterized version are recovered with $S(x;\lambda, r)=\exp_r(\frac{1}{\lambda} x)$ and $S(x;\gamma, r)=\left(1+\frac{x}{\gamma}\right)_+^{\frac{1}{r-1}}$ respectively.

Despite the connections between perspectives, there exist unique characteristics associated with each perspective as a result of their theoretical foundation. From \Cref{sec: connection_hjb}, we know that the \ac{VO} approach bridges variational optimization and optimal control theory, grounding the method on well-established concepts. The \ac{VI} perspective offers flexibility in algorithmic design with the choice of regularizing divergence and optimality likelihood function. Both variational approaches optimize directly with respect to the distribution, allowing a wide selection of policy distributions compared to the more restricting exponential family for \ac{SS}. On the other hand, \ac{SS} is a general framework that recovers the update law of all other approaches. It provides a unified platform for the theoretical analysis of different perspectives in \Cref{sec: analysis}. Its optimization formulation also allows for the use of many popular optimization techniques for improved performance, such as accelerated gradient descent.

\section{Theoretical Analysis on Convergence and Sample Complexity}
\label{sec: analysis}

\subsection{Convergence Analysis}
\label{sec: convergence}
In this section, we provide the convergence guarantees as well as the rate of convergence for the sampling-based dynamic optimization framework. In \Cref{sec: connections}, it was shown that the update laws of all different perspectives can be derived from the \ac{SS} perspective. Therefore, the analysis is performed on the \ac{SS} formulation, while it can be applied to other frameworks via specific choices of shape function $S(\cdot)$. Consider the \ac{SS} dynamic optimization problem formulation
\begin{equation}
\Theta^* = \arg\max_\Theta \ln\left(\mathbb{E}\left[S\left(- J\right)\right]\right),
\end{equation}
with the corresponding parameter update law at each time step as
\begin{equation}
\theta^{k+1}_t = \theta_t^k + \alpha^k\Eb\left[\frac{S(-J)(T(u_t^k)-\Eb[T(u_t^k)])}{\Eb\left[S(-J)\right]}\right].
\end{equation}
The step size $\alpha^k$ is introduced to facilitate convergence. The expectation can be approximated with \ac{MC} sampling. We now follow the steps in \cite{zhou2014gradient}, which shows convergence results for \ac{SS} in static optimization, to analyze the convergence for dynamic optimization.

To start off, we can write the update laws in the form of a generalized Robins-Monro algorithm as
\begin{align}
\label{eq:robin_monro}
\theta^{k+1}_t = \theta^k_t + \alpha^k_t[D(\theta^k_t) + \epsilon^k_t]
\end{align}
where $D(\theta_t^k)$ represents the true update and $\epsilon_t^k$, which behaves like a noise term, denotes the difference between the \ac{MC} approximation, $\tilde{D}(\theta_t^k)$, and the true update. Now we make a few standard assumptions:
\begin{assumption}\label{assump: convergence}
\begin{enumerate}[label=(\alph*)]
    \item The step size sequence $\{\alpha^k\}$ satisfies $\alpha^k>0$ for all $k$, $\alpha^k \searrow 0$ as $k\rightarrow \infty$, and $\sum_k^\infty \alpha^k = \infty$.
    \item The sample size for approximation is $M^k = M^0 k^\zeta$ where $\zeta > 0$. In addition, $\{\alpha^k\}$ and $\{N^k\}$ jointly satisfy $\alpha^k/\sqrt{M^k} = O(k^{-\beta})$ for some constant $\beta > 1$.
    \item The sufficient statistic function $x\mapsto T(x)$ is bounded.
\end{enumerate}
\end{assumption}

With these assumptions, we can use Theorem 1 in \cite{zhou2014gradient} to show that $\{\theta_t^k\}$ converges w.p.1 to either a limiting set or a unique equilibrium point $\theta_t^*$ of the problem.

We proceed with the assumption that $\{\theta_t^k\}$ converges to a unique equilibrium point $\theta_t^*$. Define $J_D(\theta)=\nabla_\theta D(\theta)$ as the Hessian matrix of the objective function. Assume that $J_D$ is continuous and symmetric negative definite in a neighborhood of $\theta^*_t$ and select a step size sequence of $\alpha^k=\alpha^0/k^\alpha$ with $\alpha_0>0$ and $0<\alpha<1$, and a polinomially increasing sample size sequence $M^k=M^0k^\zeta$ with $M^0>1$ and $\zeta>0$. We can rewrite \eqref{eq:robin_monro} as
\begin{equation}
\delta^{k+1}_t = \delta^k_t + \alpha^0 k^{-\alpha} D(\theta_t^k) + \alpha^0 k^{-\alpha} \epsilon_t^k
\end{equation}
by subtracting $\theta_t^*$ on both sides and setting $\delta_t^k=\theta_t^k-\theta_t^*$. Using a first order Taylor expansion of $D(\theta_t^k)$ around $\theta^*$ and the fact that $D(\theta^*)=0$, we get
\begin{equation}
\delta_t^{k+1} = \delta_t^k + \alpha^0 k^{-\alpha} J_D(\tilde{\theta}_t^k)\delta_t^k + \alpha^0 k^{-\alpha} \epsilon^k_t
\end{equation}
where $\tilde{\theta}_t^k$ lies between $\theta_t^k$ and $\theta^*_t$. We can rewrite this as
\begin{equation}
\delta_t^{k+1}=(I-k^{-\alpha}\Gamma^k_t)\delta_t^k + k^{-(\alpha+\tau)/2}\alpha^0 W_t^k
\end{equation}
where
\begin{align*}
\Gamma_t^k &= -\alpha^0 J_D (\tilde{\theta}_t^k)\\
W_t^k &= k^{(\tau-\alpha)/2} \epsilon_t^k
\end{align*}
for some positive constant $\tau$. With the previous assumptions, we know that $\Gamma_t^k\rightarrow \Gamma_t$ with $\Gamma_t=-\alpha^0 J_D(\theta_t^*)$ since $\theta_t^k$ converges to $\theta_t^*$. If we let $\tau>\alpha$ and $M^k=M^0 k^{\tau-\alpha}$, then there exists a positive semi-definite matrix $\Sigma$ such that $\lim_{k\rightarrow \infty} \Eb[W^k_t W^{k\top}_t]=\Sigma$ w.p.1 and $\lim_{k\rightarrow\infty} \Eb[\bm{1}_{\|W_t^k\|^2\geq rk^\alpha}\|W_t^k\|^2]=0, \ \forall r>0$. These conditions indicate that the stochasticity has finite variance and is bounded above.
With this we can apply Theorem 2.2 in \cite{fabian1968asymptotic} and obtain
\begin{equation}
k^{\frac{\tau}{2}}(\theta_t^k-\theta_t^*)\stackrel{\text{dist}}{\longrightarrow} \calN(0, QMQ\T)
\end{equation}
where $Q$ is an orthogonal matrix such that $Q\T(-\alpha^0 J_D(\theta_t^*))Q=\Lambda$ with $\Lambda$ being a diagonal matrix and the $(i,j)$-th entry of $M$ is given by $M_{(i,j)}=((\alpha^0)^2 Q\T\Sigma Q)_{(i,j)}(\Lambda_{(i,i)}+\Lambda_{(j,j)})^{-1}$.

\subsection{Sample Complexity}
\label{sec: complexity}
In this section, we investigate the sampling complexity for the expectation approximation of the update law via \ac{MC} sampling. For notational simplicity, the controls and their corresponding policy parameters are assumed to be 1-dimensional. The results can be extended to the multivariate case straightforwardly by analyzing a single dimension at a time. Consider a generic update law of the form
\begin{equation} \label{eq: policy_update_true}
\Theta^{k+1} = \Theta^k + \frac{\Eb\left[S(-J)\phi(U)\right]}{\Eb\left[S(-J)\right]},
\end{equation}
where $S(\cdot)$ denotes the shape function and $\phi(U)=(T(U)-\Eb[T(U)])$. The \ac{MC} approximation is performed through
\begin{equation} \label{eq: policy_update_approx}
\hat{\Theta}^{k+1} = \Theta^k + \frac{\sum^M_{m=1}\left(S(-J^m)\phi(U^m)\right)}{\sum^M_{m'=1}\left(S(-J^{m'})\right)}.
\end{equation}

Following the sampling complexity results in \cite{yoon2022sampling} for \ac{MPPI}, we make the standard, easy to satisfy assumptions below
\begin{assumption}\label{assump: sample_complexity}
\begin{enumerate}[label=(\alph*)]
\item The error bound of \ac{MC} approximation $\epsilon_1$ is smaller than the normalization term $\Eb[S(-J)]$. i.e. $\epsilon_1 < \Eb[S(-J)]$
\item The cost function is non-negative $J\geq 0$.
\end{enumerate}
\end{assumption}
Under \Cref{assump: sample_complexity}, we can derive the following theorem.
\begin{theorem}\label{theorem: complexity}
The error between the true policy update \eqref{eq: policy_update_true} and its \ac{MC} approximation can be bounded as:
\begin{equation} \label{eq: error_bound}
\theta^{k+1}_t - \frac{E_2\epsilon_1}{E_1-\epsilon_1}-\left(1-\frac{\epsilon_1}{E_1-\epsilon_1}\right)\epsilon_2\leq \hat{\theta}^{k+1}_t \leq \theta^{k+1}_t + \frac{E_2\epsilon_1}{E_1-\epsilon_1}+\left(1+\frac{\epsilon_1}{E_1-\epsilon_1}\right)\epsilon_2,
\end{equation}
where
\begin{subequations}
\begin{align}\label{eq: hoefding}
\Pb\left(|\hat{E}_1 - E_1|\geq \epsilon_1\right)&\leq \rho_1= \exp\left(-\frac{2\epsilon_1^2}{M}\right)\\
\Pb\left(|\hat{E}_2 - E_2|\geq \epsilon_2\right)&\leq\rho_2= \frac{\psi(T(u_t))}{M\epsilon_2^2(\Eb[S(-J)]^2)}, \label{eq: error_var}
 \end{align}
\end{subequations}
and
\begin{align*}
E_1&=\Eb[S(-J)],\ \hat{E}_1=\sum^M_{m=1}\left(S(-J^m)\right)\\
E_2&=\Eb[S(-J)\phi(u_t)]/E_1,\  \hat{E}_2=\sum^M_{m=1}\left(S(-J^m)\phi(u_t^m)\right)/\hat{E}_1\\
\psi(T(u_t))&=\sqrt{M_4-4M_1M_3+6M_2M_1^2-3M_1^4}+M_2-M_1^2.
\end{align*}
Here $M_i=\Eb[T(u_t)^i]$ denotes the $i$-th moment of $T(u_t)$.
\end{theorem}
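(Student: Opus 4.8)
The plan is to reduce the claim to a perturbation analysis of the ratio estimator $\hat{E}_2$ and then control its numerator and denominator noise separately; the reduction to scalar controls is already granted in the text, so it suffices to treat one coordinate. Since both \eqref{eq: policy_update_true} and \eqref{eq: policy_update_approx} add a ratio to the common iterate $\theta_t^k$, it is enough to bound $\hat{\theta}_t^{k+1} - \theta_t^{k+1} = \hat{E}_2 - E_2$, where $E_2 = \Eb[S(-J)\phi(u_t)]/E_1$ is the true increment and $\hat{E}_2 = \hat{N}/\hat{E}_1$ its Monte Carlo version, with $\hat{N} = \frac1M\sum_{m=1}^M S(-J^m)\phi(u_t^m)$. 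First I would record the algebraic identity $\hat{E}_2 = (E_2 + a)(1 + c)$, where $a := \hat{N}/E_1 - E_2$ is the numerator error measured against the \emph{true} normalizer $E_1$ and $c := E_1/\hat{E}_1 - 1$ is the relative denominator error; this is immediate from $\hat{E}_2 = (\hat{N}/E_1)(E_1/\hat{E}_1)$. Everything then reduces to bounding $|a|$ and $|c|$ on high-probability events and expanding $\hat{E}_2 - E_2 = E_2 c + a(1+c)$.

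For the denominator, observe that $S(-J)$ is bounded on $[0,1]$ for the shape functions of interest because $J \ge 0$ (part (b) of \Cref{assump: sample_complexity}) forces $-J \le 0$, e.g. $S(-J) = \exp(-J/\lambda) \in (0,1]$ and similarly for the sigmoid. Hoeffding's inequality then yields $\Pb(|\hat{E}_1 - E_1| \ge \epsilon_1) \le \rho_1$ as in \eqref{eq: hoefding}. On the complementary event, part (a) of \Cref{assump: sample_complexity} ($\epsilon_1 < \Eb[S(-J)] = E_1$) keeps $\hat{E}_1 \ge E_1 - \epsilon_1 > 0$, so $|c| = |E_1 - \hat{E}_1|/\hat{E}_1 \le \epsilon_1/(E_1 - \epsilon_1)$.

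For the numerator I would apply Chebyshev's inequality to $\hat{N}/E_1$, which is unbiased for $E_2$ when $\phi$ is centered at the true mean $\Eb[T(u_t)]$ (if the empirical-mean centering of \Cref{tab: perspective_comparison} is used instead, an $O(1/M)$ bias is carried along and absorbed). This gives $\Pb(|a| \ge \epsilon_2) \le \Var(\hat{N})/(E_1^2\epsilon_2^2) \le \psi(T(u_t))/(M E_1^2 \epsilon_2^2) = \rho_2$, matching \eqref{eq: error_var}, provided $\Var(\hat{N}) \le \psi(T(u_t))/M$. The latter I would get from $\Var(\hat{N}) = \frac1M\Var\big(S(-J)\phi(u_t)\big) \le \frac1M\Eb[S(-J)^2\phi(u_t)^2]$, then using Cauchy--Schwarz together with $S(-J) \le 1$ to bring out the fourth central moment $\Eb[\phi(u_t)^4] = M_4 - 4M_1 M_3 + 6M_2 M_1^2 - 3M_1^4$ under the square root, with the residual $M_2 - M_1^2 = \Var(\phi(u_t))$ from the remaining lower-order terms; this follows the route of \cite{yoon2022sampling} for \ac{MPPI}.

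Finally I would assemble: on the intersection of the two good events (whose failure probabilities are $\rho_1$ and $\rho_2$), substitute $|c| \le \epsilon_1/(E_1-\epsilon_1)$ and $|a| \le \epsilon_2$ into $\hat{E}_2 - E_2 = E_2 c + a(1+c)$ and track signs. Because the cross-term $ac$ reinforces $a$ on the upper side but partially cancels it on the lower side, the worst cases are not symmetric, which is exactly what produces the factor $(1 + \epsilon_1/(E_1-\epsilon_1))$ on $\epsilon_2$ in the upper bound of \eqref{eq: error_bound} versus $(1 - \epsilon_1/(E_1-\epsilon_1))$ in the lower bound, together with the common term $E_2\epsilon_1/(E_1-\epsilon_1)$. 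The hard part is the variance estimate $\Var(\hat{N}) \le \psi(T(u_t))/M$: handling the dependence between $S(-J)$ and $\phi(u_t)$ so that the bound collapses precisely to the stated combination of second and fourth central moments, rather than a looser one, is the delicate step; the sign bookkeeping in the last step is routine but must be done carefully to land the asymmetric constants.
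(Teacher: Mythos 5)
Your proposal is correct and follows essentially the same route as the paper: your factorization $\hat{E}_2=(\hat N/E_1)(E_1/\hat E_1)=(E_2+a)(1+c)$ is exactly the paper's $\hat A\hat B$ split, with Hoeffding controlling the denominator factor, Chebyshev plus a variance lemma (via Cauchy--Schwarz and $S\in[0,1]$, following \cite{yoon2022sampling}) controlling the numerator, and interval multiplication producing the asymmetric constants. The only cosmetic difference is that the paper obtains $\psi$ through the decomposition $\Var(S\phi)=\Cov(S^2,\phi^2)+\Eb[S^2]\Eb[\phi^2]-\Eb[S\phi]^2$ rather than starting from $\Eb[S^2\phi^2]$, but the ingredients and the resulting bound are the same.
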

\begin{proof}
We first write the update law for a single timestep in a concise form as
\begin{equation}
\theta^{k+1}_t = \theta^k_t + \left(\frac{\Eb[S(-J)]}{\Eb[S(-J)]}\right)\frac{\Eb[S(-J)\phi(u_t)]}{\Eb[S(-J)]}=\theta^k_t + AB,
\end{equation}
where $A=\frac{\Eb[S(-J)]}{\Eb[S(-J)]}$ and $B=\frac{\Eb[S(-J)\phi(u_t)]}{\Eb[S(-J)]}$. Similarly, we write the approximation as
\begin{equation}
\hat{\theta}_t^{k+1} = \theta^k_t + \left(\frac{\Eb[S(-J)]}{\sum_{m'=1}^M( S(-J^m))}\right)\frac{\sum^M_{m=1}\left(S(-J^m)\phi(u_t^m)\right)}{\Eb[S(-J)]} = \theta^k_t + \hat{A}\hat{B},
\end{equation}
with $\hat{A}=\frac{\Eb[S(-J)]}{\sum_{m'=1}^M( S(-J^m))}$ and $\hat{B}=\frac{\sum^M_{m=1}\left(S(-J^m)\phi(u_t^m)\right)}{\Eb[S(-J)]}$. The approximation error is $\theta_t^k - \hat{\theta}^k_t=AB - \hat{A}\hat{B}$. For choices of shape function $S(\cdot)\in[0,1]$, we get \eqref{eq: hoefding} from Hoeffding's inequality. Note that all shape functions in this paper are bounded in $[0,1]$. With this we can bound $\hat{A}$ around $A=1$ as
\begin{equation}
1-\frac{\epsilon_1}{E_1-\epsilon_1}\leq \hat{A} \leq 1+\frac{\epsilon_1}{E_1-\epsilon_1}.
\end{equation}
\Cref{assump: sample_complexity}(a) indicates that both limits of the bound for $\hat{A}$ are positive. Therefore, the parameter error bound is upper bounded by the product of the bound for $\hat{A}$ and $\hat{B}$, with $\hat{B}$ as $B-\epsilon_2\leq \hat{B}\leq B+\epsilon_2 $. The product can be computed as
\begin{align*}
\left(1-\frac{\epsilon_1}{E_1-\epsilon_1}\right)\left(B-\epsilon_2\right)\leq &\hat{A}\hat{B}\leq \left(1+\frac{\epsilon_1}{E_1-\epsilon_1}\right)\left(B+\epsilon_2\right)\\
\Rightarrow \theta^{k+1}_t - \frac{B\epsilon_1}{E_1-\epsilon_1}-\left(1-\frac{\epsilon_1}{E_1-\epsilon_1}\right)\epsilon_2\leq &\hat{\theta}^{k+1}_t \leq \theta^{k+1}_t + \frac{B\epsilon_1}{E_1-\epsilon_1}+\left(1+\frac{\epsilon_1}{E_1-\epsilon_1}\right)\epsilon_2.
\end{align*}
\end{proof}
To obtain the error bound $\epsilon_2$, we first derive a bound $\Var\left[\frac{S(-J)\phi(u_t)}{\Eb[S(-J)]}\right]$ in the following lemma.
\begin{lemma}\label{lemma: var}
The bound on the variance of weighted policy parameter depends on $\Eb[S(-J)]$ and moments of $T(u_t)$ as
\begin{equation} \label{eq: var_bound}
\Var\left(\frac{S(-J)\phi(u_t)}{\Eb[S(-J)]}\right)=\frac{\Var\left(S(-J)\phi(u_t)\right)}{\Eb[S(-J)]^2} \leq \frac{\psi(T(u_t))}{\Eb[S(-J)]^2}.
\end{equation}
\end{lemma}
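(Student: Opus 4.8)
The first equality in \eqref{eq: var_bound} is just the scaling property of variance: $\Eb[S(-J)]$ is a deterministic positive constant (by \Cref{assump: sample_complexity}(a) it is bounded away from $0$), so $\Var\!\big(S(-J)\phi(u_t)/\Eb[S(-J)]\big)=\Var\!\big(S(-J)\phi(u_t)\big)/\Eb[S(-J)]^2$. Hence the lemma reduces to the inequality $\Var\!\big(S(-J)\phi(u_t)\big)\le\psi(T(u_t))$, with the normalizing denominator carried along untouched.

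For that inequality the plan is to start from $\Var(Z)\le\Eb[Z^2]$, giving $\Var\!\big(S(-J)\phi(u_t)\big)\le\Eb\!\big[S(-J)^2\phi(u_t)^2\big]$, and then \emph{not} assume independence of $S(-J)$ and $u_t$ (the trajectory cost depends on the whole control sequence). Instead I would split the product expectation as $\Eb[S(-J)^2\phi(u_t)^2]=\Eb[S(-J)^2]\,\Eb[\phi(u_t)^2]+\Cov\!\big(S(-J)^2,\phi(u_t)^2\big)$, bound $\Eb[S(-J)^2]\le 1$ using the fact already invoked in the proof of \Cref{theorem: complexity} that every shape function considered takes values in $[0,1]$ on the relevant range ($J\ge 0$), and bound the covariance by Cauchy--Schwarz, $\big|\Cov(S(-J)^2,\phi(u_t)^2)\big|\le\sqrt{\Var(S(-J)^2)}\,\sqrt{\Var(\phi(u_t)^2)}$. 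Using $S(-J)\in[0,1]$ once more gives $\Var(S(-J)^2)\le\Eb[S(-J)^4]\le 1$, while $\Var(\phi(u_t)^2)\le\Eb[\phi(u_t)^4]$.

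It then remains to rewrite the surviving quantities as the advertised polynomials in $M_i=\Eb[T(u_t)^i]$. Since $\phi(u_t)=T(u_t)-\Eb[T(u_t)]$ has mean zero, expanding $(T(u_t)-M_1)^2$ and $(T(u_t)-M_1)^4$ by the binomial theorem yields $\Eb[\phi(u_t)^2]=M_2-M_1^2$ and $\Eb[\phi(u_t)^4]=M_4-4M_1M_3+6M_2M_1^2-3M_1^4$, which are exactly the two pieces of $\psi(T(u_t))$. Assembling the bounds gives $\Var(S(-J)\phi(u_t))\le\Eb[\phi(u_t)^2]+\sqrt{\Eb[\phi(u_t)^4]}=\psi(T(u_t))$, and dividing by $\Eb[S(-J)]^2$ establishes \eqref{eq: var_bound}.

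The only delicate point I expect is the dependence between $S(-J)$ and $T(u_t)$: one must resist factoring $\Eb[S(-J)^2\phi(u_t)^2]$ and route everything through the covariance term, while making sure all of the $S$-dependence is ultimately controlled purely by $S\in[0,1]$ so that the bound is uniform over the admissible shape functions; the moment bookkeeping is routine. In fact a cruder route --- $\Var(S(-J)\phi(u_t))\le\Eb[S(-J)^2\phi(u_t)^2]\le\Eb[\phi(u_t)^2]=M_2-M_1^2\le\psi(T(u_t))$, using $S(-J)^2\le 1$ pointwise --- closes the proof in one line and even gives a tighter bound; I would nonetheless present the covariance-based argument since it is the one that reproduces the exact expression $\psi(T(u_t))$ appearing in \eqref{eq: error_var}.
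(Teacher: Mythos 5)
Your argument is correct and follows essentially the same route as the paper's: decompose $\Eb[S(-J)^2\phi(u_t)^2]$ into $\Eb[S(-J)^2]\Eb[\phi(u_t)^2]+\Cov(S(-J)^2,\phi(u_t)^2)$, control the covariance by Cauchy--Schwarz, and use $S(-J)\in[0,1]$ to kill all $S$-dependence. Your explicit step $\Var(\phi(u_t)^2)\le\Eb[\phi(u_t)^4]$ in fact makes the moment bookkeeping line up with the stated $\psi(T(u_t))$ exactly (the paper passes over this silently), and your one-line aside that $S^2\le 1$ already gives the tighter bound $M_2-M_1^2$ is a valid observation, but neither changes the substance of the argument.
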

\begin{proof}
Using the definition of variance and covariance we get
\begin{align*}
\Var\left(S(-J)\phi(u_t)\right)&=\Cov(S(-J)^2,\phi(u_t)^2) + \Eb[S(-J)^2]\Eb[\phi(u_t)^2]-\Eb[S(-J)\phi(u_t)]^2\\
&\leq \Cov(S(-J)^2,\phi(u_t)^2) + \Eb[S(-J)^2]\Eb[\phi(u_t)^2]\\
&\leq \left(\Var(S(-J)^2)\Var(\phi(u_t)^2)\right)^{\frac{1}{2}} + \Eb[S(-J)^2]\Eb[\phi(u_t)^2]\\
&\leq \left(\Var(\phi(u_t)^2)\right)^{\frac{1}{2}} + \Eb[\phi(u_t)^2]\\
&= \psi(T(u_t))
\end{align*}
where the second inequality results from the Cauchy Schwartz inequality $\Cov(X,Y)\leq\sqrt{\Var(X)\Var(Y)}$ and the last inequality results from $\Eb[S(-J)^2]\leq 1$ and $\Var(S(-J)^2)\leq 1$, which is derived from the property of bounded random variable
\begin{equation*}
\Var(S(-J))\leq (1-\Eb[S(-J)])\Eb[S(-J)]\leq \Eb[S(-J)]\leq 1.
\end{equation*}
\end{proof}
With \eqref{eq: var_bound}, \cref{eq: error_var} can be derived through Chebyshev's inequality and the proof for \cref{theorem: complexity} is completed.

\Cref{theorem: complexity} provides a probabilistic bound on the \ac{MC} approximated parameter update $\hat{\theta} ^{k+1}_t$ from the desired one $\theta^{k+1}_t$. For cases where the expected transformed cost $\Eb[S(-J)]$ and moments of policy parameter's sufficient statistics $M_i$ can be explicitly calculated, the risks $\rho_1, \rho_2$ can be analyzed based on the number of samples $M$ and error bounds $\epsilon_1,\epsilon_2$. The risks decrease as the number of samples increase with $\rho_1=O(e^{-M})$ and $\rho_2=O(1/M)$.

\section{Distributed Sampling-based Dynamic Optimization}
\label{sec: distributed_opt}
In this section, we derive the distributed sampling-based dynamic optimization framework. 
\subsection{Formulation}
\label{sec: dist_sampling_derivation}
Consider a system of $N$ agents. For a representative $i$-th agent, the nonlinear discrete-time dynamics can be written as $x^i_{t+1} = F^i(x^i_t, u^i_t)$. 
The dynamics for the entire collection of agents can be represented in the compact form of $x_{t+1} = F(x_t, u_t)$.  We use letters without superscript to represent the collection of all agents (e.g. $x_t=[x^1_t; x^2_t; \cdots, x^N_t]$, $u_t=[u^1_t; u^2_t; \cdots; u^N_t]$ and $F(x_t, u_t)=[F^1(x^1_t, u^1_t); \cdots; F^N(x^N_t; u^N_t)]$).  
The multi-agent control problem can be formulated as
\begin{equation}
\begin{split} \label{eq: control_problem}
& U^* = \argmin_U \sum_{i=1}^N \sum_{t=0}^T J^i(x^i_t, u^i_t)\\
\textit{s.t.} \quad & x^i_{t+1} = F^i(x^i_t, u^i_t), \ \forall i=1,\cdots, N, \forall t = 0,\cdots,T-1\\
& g^i(x^i_t)\geq 0, \ \forall i=1,\cdots,N, \ \forall t = 1,\cdots,T \\
& h^{ij}(x^i_t, x^j_t) \geq 0, \ \forall i,j=1,\cdots,N, i\neq j, \ \forall t = 1,\cdots,T,
\end{split}
\end{equation}
where $g^i$ and $h^{ij}$ denote single-agent and inter-agent constraints, respectively. In multi-robot problems, the former could represent obstacle avoidance constraints and the latter collision avoidance constraints between different robots.

The control problem can be tackled by directly optimizing \eqref{eq: control_problem} with respect to all agents. This corresponds to the \textit{centralized} approach since a central unit collects information from all agents and computes their corresponding control policies. One critical issue with this approach is its scalability. As the number of agents increases, the dimensionality of the multi-agent problem increases as well, which prohibits directly employing sampling-based methods.  

To tackle this issue, we exploit the separable structure of the problem and proposed a \textit{decentralized} approach. To achieve this, the control of each agent is computed locally while satisfying collision constraints with respect to agents that are \textit{close enough} to the ego agent. To derive the decentralized approach, we first define $\calN^i_t$ as the neighborhood of agent $i$. There are two ways of defining the neighborhood: 1) distance-based neighborhood, $\calN^i_t=\{i=1,\cdots,N\backslash \{i\} : \|x^i_t-x^j_t\|_2\leq \epsilon\}$, which is characterized by a norm ball around the ego agent; 2) fixed-size neighborhood, where the set contains a fixed number of agents closest to the ego agent. In addition, we define $\calM^i_t$ as the collection of agents $j$ with agent $i$ in their neighborhood $\calM^i_t=\{j=1,\cdots,N\backslash \{i\} : i\in\calN^j_t\}$. 

\textbf{Global Consensus:} Define $\{x^{ij}\}$ and $\{u^{ij}\}$ as agent $i$'s perception of the neighbor agent $j$'s state and control. We can now augment the states and controls of each agent with $\tilde{x}^i_t=[x^i_t;\{x^{ij}_t\}_{j\in\calN^i_t}]$ and $\tilde{u}^i_t=[u^i_t;\{u^{ij}_t\}_{j\in\calN^i_t}]$. To ensure consensus between agents, we introduce global state and control variables $y=[y^1; y^2; \cdots; y^N], z=[z^1;z^2;\cdots;z^N]$ whose components should be equal to all local variables referring to the same agent. We can now formulate the optimization problem as
\begin{equation}
\begin{split}
 \{\tilde{U}^i\}_{i=1,\cdots,N}&=\argmin \sum_{i=1}^N \sum_{t=0}^T J^i(x^i_t,u^i_t)\\
\textit{s.t.} \quad & \tilde{x}^i_{t+1}=\tilde{F}^i(\tilde{x}^i_t,\tilde{u}^i_t); \hspace{3mm} g^i(x^i_t)\geq 0 \\
& \tilde{h}^i(\tilde{x}^i_t) \geq 0; \hspace{2mm} \tilde{x}^i_t = \tilde{y}^i_t; \hspace{2mm} \tilde{u}^i_t = \tilde{z}^i_t,
\end{split}
\end{equation}
where $\tilde{F}^i=[F^i;\{F^j\}_{j\in\calN^i_t}]$, $\tilde{h}_i(\tilde{x}^i_t) = [\{h_{ij}(x^i_t, x^{ij}_t)\}_{\calN^i_t}]$, $\tilde{y}^i_t=[y^i_t;\{y^j_t\}_{j\in\calN^i_t}]$ and   $\tilde{z}^i_t=[z^i_t;\{z^j_t\}_{j\in\calN^i_t}]$. With this formulation, we can follow a consensus \ac{ADMM} approach to solve the problem in a decentralized manner. For each agent $i$, the optimization procedure is as follows:
\begin{enumerate}
\item Solve
\begin{equation} \label{eq: primal_problem}
\begin{split}
\tilde{U}^i = &\argmin \ J^i(X^i, U^i) + L(\tilde{X}^i, \tilde{U}^i, \tilde{Y}^i, \tilde{Z}^i,\Xi^i, \Gamma^i)\\
\textit{s.t.} \quad & \tilde{x}^i_{t+1} = \tilde{F}^i(\tilde{x}^i_t, \tilde{u}^i_t); \hspace{2mm} g^i(x^i_t)\geq 0; \hspace{2mm} \tilde{h}^i(\tilde{x}^i_t) \geq 0,
\end{split}
\end{equation}
where
\begin{equation}\label{eq: lagrangian}
L(\tilde{X}^i, \tilde{U}^i, \tilde{Y}^i, \tilde{Z}^i,\Xi^i, \Gamma^i) = \sum_{t=0}^T \frac{\mu}{2}\left\|\tilde{x}^i_t-\tilde{y}^i_t + \frac{\xi^i_t}{\mu}\right\|^2 + \frac{\nu}{2}\left\|\tilde{u}^i_t-\tilde{z}^i_t + \frac{\gamma^i_t}{\nu}\right\|^2.
\end{equation}
Here $\xi, \gamma$ are the dual variables and $\mu, \nu$ are the penality parameters;

\item Collect $x^{ji}_t, u^{ji}_t$ and $\xi^{ji}_t, \gamma^{ji}_t$ from agents $j\in\calM^i_t$;

\item Update global variables
\begin{equation} \label{eq: global_update}
y^i_t = \frac{1}{|\calM^i_t|+1}\sum_{j\in\calM^i_t\cup \{i\}}\left(x^{ji}_t + \frac{1}{\mu}\xi^{ji}_t\right); 
\hspace{2mm}
z^i_t = \frac{1}{|\calM^i_t|+1}\sum_{j\in\calM^i_t\cup \{i\}}\left(u^{ji}_t + \frac{1}{\nu}\gamma^{ji}_t\right)
\end{equation}
where $x_t^{ii}$ and $u_t^{ii}$ coincide with $x_t^i$ and $u_t^i$;

\item Collect $y^j_t, z^j_t$ from $j\in\calN^i_t$;

\item Update dual variables
\begin{equation}
\xi^i_t \mathrel{+}= \mu(\tilde{x}^i_t - \tilde{y}^i_t); \hspace{2mm}
\gamma^i_t \mathrel{+}= \nu(\tilde{u}^i_t - \tilde{z}^i_t).\label{eq: dual_update}
\end{equation}
\end{enumerate}

Note that the optimization problem in \cref{eq: primal_problem} is of the form in \cref{eq: dyn_opt} but with constraints. Due to the capability of sampling-based optimizers to handle non-differentiable costs, these constraints can be easily incorporated into the cost function via indicator functions $C(\tilde{x}^i_t)=\rho(\mathbbm{1}_{h(x^i_t, x^{ij}_t)\leq 0} + \mathbbm{1}_{g^i(x^i_t)\leq 0})$ with $\rho \gg 0$ taking a large positive value. With the neighborhood setup, only communication between agent $i$ and the set of agents in $\calN^i_t$ is required (which implies communication with agents in $\calM^i_t$). Dual variables $\xi$ and $\gamma$ have the same dimensionality as the augmented state and control  variables with $\xi^i_t = [\{\xi^{ij}_t\}_{j\in\{i\}\cup \calN^i_t}]$ and $\gamma^i_t = [\{\gamma^{ij}_t\}_{j\in\{i\}\cup \calN^i_t}]$.

\subsection{Algorithm}
\label{sec: algorithm}
\begin{algorithm}[t]
\caption{Distributed Sampling-based MPC}
\begin{algorithmic}[1]
\STATE \textbf{Given:} $x_0$: initial state; $N$: number of agents; $T$: number of MPC steps; $L$: number of ADMM iterations per MPC step;

\STATE Initialize $\Xi_0, \Gamma_0, X_0, U_0, Y_0, Z_0, \calN_0, \calM_0$
\FOR{$t=0$ to $T$}
\STATE $\tilde{x}_t^i = [x^i_t;\{x^{ij}_t\}_{j\in\calN^i_t}], \ \forall i=1,\cdots,N$
\STATE $\tilde{Y}_t^i \leftarrow [Y^i_t; \{Y^j_t\}_{j\in\calN^i_t}]; \ \tilde{Z}_t^i \leftarrow [Z^i_t; \{Z^j_t\}_{j\in\calN^i_t}]$
\FOR{$l=1$ to $L$}
\FOR{$i=1$ to $N$ \textit{in parallel}}
\STATE $\tilde{X}_t^i, \tilde{U}_t^i \leftarrow$ $\textit{SamplingOptimizer}(\tilde{x}_t^i, \tilde{Y}_t^i, \tilde{Z}_t^i, \Xi_t^i, \Gamma_t^i)$ \ (\Cref{alg: optimizer})
\STATE Collect $\tilde{X}_t^j, \tilde{U}_t^j$ from neighbor agents $j\in\calM^i_t$
\STATE $Y^i_t, Z^i_t \leftarrow$ $\textit{GlobalUpdate}(\tilde{X}_t^i, \tilde{U}_t^i, \Xi_t^i, \Gamma_t^i, \calM^i_t)$
\STATE Collect $Y^j_t, Z^j_t$ from neighbor agents $j\in\calN^i_t$
\STATE $\tilde{Y}_t^i \leftarrow [Y^i_t; \{Y^j_t\}_{j\in\calN^i_t}]; \ \tilde{Z}_t^i \leftarrow [Z^i_t; \{Z^j_t\}_{j\in\calN^i_t}]$
\STATE $\Xi^i_t, \Gamma^i_t \leftarrow$ $\textit{DualUpdate}(\tilde{X}^i_t, \tilde{U}^i_t, \tilde{Y}^i_t, \tilde{Z}^i_t)$
\ENDFOR
\ENDFOR
\STATE $x_{t+1} \leftarrow$ $F(x_t, u_t)$
\STATE $\tilde{X}_{t+1}, \tilde{U}_{t+1}, Y_{t+1}, Z_{t+1}, \Xi_{t+1}, \Gamma_{t+1} \leftarrow$ $\textit{RecedeHorizon}(\tilde{X}_{t}, \tilde{U}_{t}, Y_{t}, Z_{t}, \Xi_{t}, \Gamma_{t})$
\STATE $\{\calN^i_{t+1}, \calM^i_{t+1}\} \leftarrow $ $\textit{ComputeNeighborhood}(x_{t+1})$
\STATE $\tilde{X}_{t+1}, \tilde{U}_{t+1}, \Xi_{t+1}, \Gamma_{t+1} \leftarrow$ $\textit{LocalUpdate}(\tilde{X}_{t+1}, \tilde{U}_{t+1}, Y_{t+1}, Z_{t+1})$
\STATE $Y_{t+1}, Z_{t+1} \leftarrow$ $\textit{GlobalUpdate}(\tilde{X}_{t+1}, \tilde{U}_{t+1}, \Xi_{t+1}, \Gamma_{t+1}, \calM_{t+1})$
\STATE $\Xi_{t+1}, \Gamma_{t+1} \leftarrow$ $\textit{DualUpdate}(\tilde{X}_{t+1}, \tilde{U}_{t+1}, \tilde{Y}_{t+1}, \tilde{Z}_{t+1})$
\ENDFOR
\end{algorithmic}
\label{alg: dist_MPC}
\end{algorithm}
\begin{algorithm}[t]
\caption{Sampling Optimizer (\acl{SS})}
\begin{algorithmic}[1]
\STATE \textbf{Parameters:} $M$: number of policy samples; $T'$: prediction horizon, $K$: number of optimization iterations; $\mu, \nu, \rho$: penalty coefficients for state, control consensus and crashing;
\STATE \textbf{Input:} $\tilde{x}_t, \tilde{Y}_t, \tilde{Z}_t, \Xi_t, \Gamma_t$;
\STATE Initialize $\Theta_t^0, \tilde{x}^m_t=\tilde{x}_t, \forall m=1,\cdots,M$
\FOR{$k=0$ to $K-1$}
\FOR{$m=1$ to $M$ \textit{in parallel}}
\FOR{$t'=0$ to $T'-1$}
\STATE $\tilde{u}^m_{t+t'} \sim q(\tilde{u}_{t+t'}; \theta_{t+t'})$
\STATE $\tilde{x}^m_{t+t'+1} = F(\tilde{x}^m_{t+t'}, \tilde{u}^m_{t+t'})$
\ENDFOR
\STATE $\mathcal{J}^m=J(X^m_t,U^m_t) + L(\tilde{X}^m_t, \tilde{U}^m_t, \tilde{Y}_t, \tilde{Z}_t,\Xi_t, \Gamma_t) + C(\tilde{X}^m_t)$
\ENDFOR
\STATE $\Theta^{k+1}_t = \Theta^k_t + \frac{\sum_{m} S(-\mathcal{J}^m) (U^m - \frac{1}{M}\sum_{n} U^{n}))}{\sum_{n}S(-\mathcal{J}^{n})}$
\ENDFOR
\STATE $\tilde{X}_t, \tilde{U}_t \leftarrow$ $\textit{TestPolicy}(\tilde{x}_t, \Theta^k_t)$
\RETURN $\tilde{X}_t, \tilde{U}_t$
\end{algorithmic}
\label{alg: optimizer}
\end{algorithm}

With the framework proposed in \Cref{sec: dist_sampling_derivation}, we introduce the distributed sampling-based MPC algorithm, summarized in Algorithm \ref{alg: dist_MPC} with the procedure for sampling-based optimizers detailed in Algorithm \ref{alg: optimizer}.

Given the initial state, the neighborhood sets $\calN_0, \calM_0$ are computed. The state and control trajectories $X, U$ for all agents are initialized along with their global counterparts such that $X=Y$ and $U=Z$. The dual variables $\Xi, \Gamma$ are initialized as zero. We use subscript on upper case letters to denote trajectories over the planning horizon $T'$ at MPC timstep $t$ (e.g. $X_t = [x_t, x_{t+1}, \cdots, x_{t+T'}]$). At each MPC step, the initial state and global variables are augmented by collecting values from agents in each neighborhood. Subsequently, $L$ \ac{ADMM} iterations are carried out where each agent solves the local optimization problem \eqref{eq: primal_problem} using sampling-based optimizer (\Cref{alg: optimizer}). Each step between 8 and 13 needs to be completed by all agents before carrying out the next one. The agent index is omitted in \Cref{alg: optimizer} for simplicity.

The optimizer solves the local problem by generating $M$ control trajectories from the sampling distribution, propagating state trajectories, and computing a cost for each state-control trajectory. The sampling distribution is updated via a weighted average of the control samples based on the cost. Note that \ac{SS} update law is used but the same steps hold for all perspectives in \Cref{sec: sampling_opt}.

The local solutions are then shared to perform the global and dual variable update \eqref{eq: global_update}\eqref{eq: dual_update}. After optimization, the first control is executed to propagate all agents to the next state and the prediction horizon is receded by 1. The neighborhood sets are recomputed using the new state. The local variables $\tilde{X}^i, \tilde{U}^i$ and dual variables $\Xi, \Gamma$ are updated through the following logic:
\begin{itemize}
\item If agent $j$ stays in the neighborhood, then $\tilde{X}^{ij}, \tilde{U}^{ij}$ and $\Xi^{ij}, \Gamma^{ij}$ remain unchanged;
\item If agent $j$ leaves the neighborhood, then $\tilde{X}^{ij}, \tilde{U}^{ij}$ and $\Xi^{ij}, \Gamma^{ij}$ are dropped;
\item If agent $j$ enters the neighborhood, then $\tilde{X}^{ij}=Y^j, \tilde{U}^{ij}=Z^j$ and $\Xi^{ij}, \Gamma^{ij}$ are set to zero.
\end{itemize}
Finally, the global and dual variables are updated using the new local variables.

\section{Simulation Results}
\label{sec: results}

\bgroup
\begin{table*}[t]
    \centering
    \caption{Mean cost and variance comparison for a point mass system over 20 seeds.}
    \begin{tabular}{@{} l  | c | c | c @{}}
    \toprule
     \multicolumn{1}{c}{} & \multicolumn{1}{c}{CEM} & \multicolumn{1}{c}{MPPI} & \multicolumn{1}{c}{Tsallis} \\
    \midrule
    Mean & $2.734 \times 10^2$ & $3.29\times 10^2$ & $\bf{2.725\times 10^2}$ \\
    \midrule
    Variance &  $\bf{5.27}$ & $39.3$ & $7.64$ \\
    \bottomrule
    \end{tabular}
    \label{tab: numerical_data}
\end{table*}
\egroup

\begin{table}[t]
    \centering
    \caption{Comparison of different distributed framework against the centralized framework for different tasks and policy distribution over 5 random seeds. The success rate measures whether collision occurs during the trial.}
    {\fontsize{7}{9}\selectfont 
    \begin{tabular}{
        @{}
        l
        c
        S[table-format=1.2e-2, table-number-alignment = center]
        S[table-format=3.0,table-number-alignment = center,table-figures-decimal=1,table-auto-round]
        S[table-format=1.2e-2, table-number-alignment = center]
        S[table-format=3.0,table-number-alignment = center,table-figures-decimal=1,table-auto-round]
        S[table-format=1.2e-2, table-number-alignment = center]
        S[table-format=3.0,table-number-alignment = center,table-figures-decimal=1,table-auto-round]
        S[table-format=1.2e-2, table-number-alignment = center]
        S[table-format=3.0,table-number-alignment = center,table-figures-decimal=1,table-auto-round]
        @{}
    }
    \toprule
    & & \multicolumn{2}{c}{MPPI} & \multicolumn{2}{c}{CEM}
    & \multicolumn{2}{c}{Tsallis} & \multicolumn{2}{c}{Central} \\
    \cmidrule(lr){3-4} \cmidrule(lr){5-6} \cmidrule(lr){7-8} \cmidrule(lr){9-10}
    {System} & {Policy} & {Mean} & {Success} & {Mean} & {Success} & {Mean} & {Success} &
    {Mean} &
    {Success} \\
    \midrule
    \multirow{3.8}{*}{\begin{tabular}[c]{@{}c@{}}Dubins\\Narrow\\Crossing\end{tabular}}
    & \begin{tabular}[c]{@{}c@{}} UG \\ \end{tabular} &
    1.66e3 & 100 & 5.97e2 & 100 & 6.19e2 & 100 & 6.36e2 & 100 \\
    \cmidrule(l){2-10} 
    & \begin{tabular}[c]{@{}c@{}}GM \\ \end{tabular} &
    1.58e3 & 100 & 5.76e2 & 100 & 5.59e2 & 100 & 6.77e2 & 100 \\
    \cmidrule(l){2-10} 
    & S & 2.57e3 & 80 & 9.25e2 & 100 & 8.49e2 & 100 & 9.43e2 & 100\\
    \midrule
    \multirow{3.8}{*}{\begin{tabular}[c]{@{}c@{}}Dubins\\Swap\end{tabular}}
    & \begin{tabular}[c]{@{}c@{}}UG \\\end{tabular} &
    2.49e5 & 100 & 1.25e5 & 100 & 1.17e5 & 100 & 2.22e5 & 100 \\
    \cmidrule(l){2-10} 
    & \begin{tabular}[c]{@{}c@{}}GM \\\end{tabular} &
    2.53e5 & 100 & 1.18e5 & 100 & 1.15e5 & 100 & 2.21e5 & 100 \\
    \cmidrule(l){2-10} 
    & S &
    5.59e5 & 40 & 2.82e5 & 100 & 5.12e5 & 100 & 4.61e5 & 100 \\
    \midrule
    \multirow{3.8}{*}{\begin{tabular}[c]{@{}c@{}}Dubins\\Formation\end{tabular}}
    & \begin{tabular}[c]{@{}c@{}}UG \\\end{tabular} &
        n/a & 0 & 2.31e5 & 100 & 2.37e5 & 100 & 6.26e5 & 60 \\
    \cmidrule(l){2-10} 
    & \begin{tabular}[c]{@{}c@{}}GM \\\end{tabular} &
        n/a & 0 & 2.29e5 & 100 & 2.28e5 & 100 & 6.39e5 & 60\\
    \cmidrule(l){2-10} 
    & S & n/a & 0 & 2.43e5 & 80 & 2.44e5 & 60 & n/a & 0\\
    \midrule
    \multirow{3.8}{*}{\begin{tabular}[c]{@{}c@{}}Quadcopter\\Formation\end{tabular}}
    & \begin{tabular}[c]{@{}c@{}}UG\\ \end{tabular} &
        1.91e6 & 80 & 6.40e5 & 100 & 6.42e5 & 100 & 9.12e5 & 100 \\
    \cmidrule(l){2-10} 
    & \begin{tabular}[c]{@{}c@{}}GM \\ \end{tabular} &
        2.48e6 & 80 & 6.33e5 & 100 & 6.32e5 & 100 & 8.8e5 & 100 \\
    \cmidrule(l){2-10} 
    & S &
        6.90e7 & 100 & 6.55e5 & 100 & 6.68e5 & 80 & 8.99e5 & 100 \\
    \bottomrule
    \end{tabular}
    }
    \label{tab:data_table_new}
\end{table}
In this section, we test the proposed frameworks using sampling optimizers derived from the three different perspectives, while also comparing against the equivalent centralized approachexs as the baseline. We denote the \ac{VO} approach as \ac{MPPI}, \ac{VI} approach as Tsallis, and \ac{SS} approach as \ac{CEM}. All comparisons are performed on 4 tasks in simulation: narrow crossing, swapping, formation with Dubins vehicle and quadcopter formation. To ensure fair comparisons, all hyperparameters for each method are tuned using a combination of the TPE algorithm \citep{bergstra2011algorithms} from the Neural Network Intelligence (NNI) AutoML framework \citep{Microsoft_Neural_Network_Intelligence_2021} and hand tuning. \Cref{tab:data_table_new} compares the resulting mean costs and success rate (no collision) over 5 seeds for the three different distributed optimizers and the best performing centralized optimizer\footnote{A video comparison of the different frameworks using unimodal Gaussian policy can be found \href{https://youtu.be/XHDJhNDqwtU}{here}.}.

\subsection{Numerical Example}

We first test the cost and standard deviation of the different frameworks on a simple single-agent dynamic optimization problem. The task is to control a point mass starting from an initial position to reach a target location while avoiding an obstacle. The point mass system is characterized by the deterministic double integrator dynamics. Each framework is tuned with the NNI AutoML tool on 1 seed and tested on 20 random seeds. The results are included in \Cref{tab: numerical_data}. We can observe that while Tsallis has the lowest mean cost, \ac{CEM} results in the lowest cost standard deviation. The results are aligned with the analysis in \cite{wang2021variational}, where \ac{CEM} is infinitely risk averse and Tsallis can be both risk averse and risk seeking based on the choice of $r$. Intuitively, the Tsallis framework assigns higher weights on the best performing samples, leading to better cost mean but worse standard deviation.

\begin{figure}[t]
  \begin{subfigure}{0.245\textwidth}
    \includegraphics[width=\linewidth]{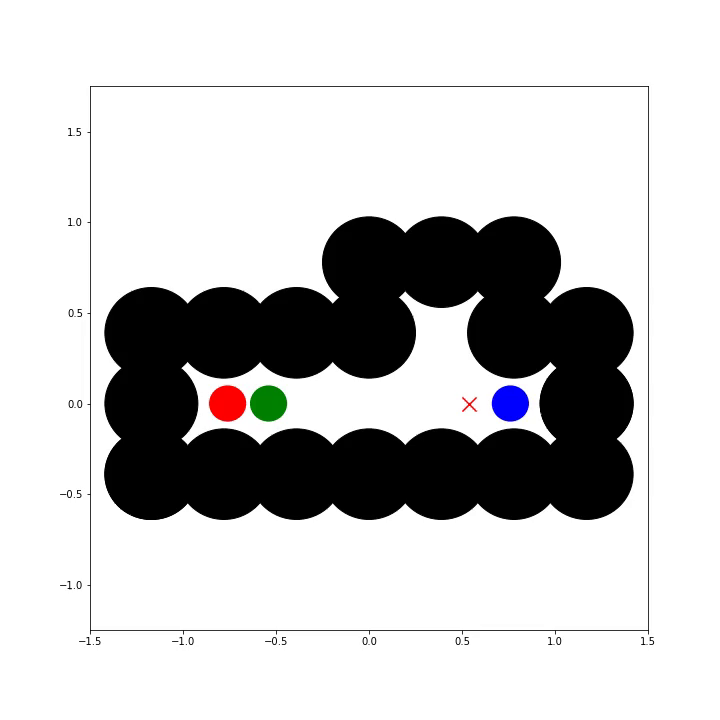}
    \caption{$t=0$} \label{fig: dubins_t0}
  \end{subfigure}%
  \begin{subfigure}{0.245\textwidth}
    \includegraphics[width=\linewidth]{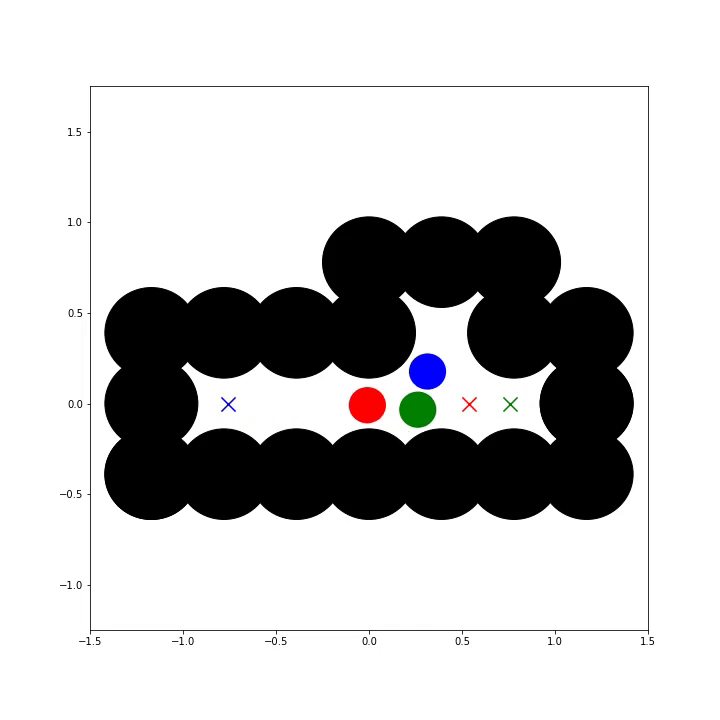}
    \caption{$t=50$} \label{fig: dubins_t50}
  \end{subfigure}%
  \begin{subfigure}{0.245\textwidth}
    \includegraphics[width=\linewidth]{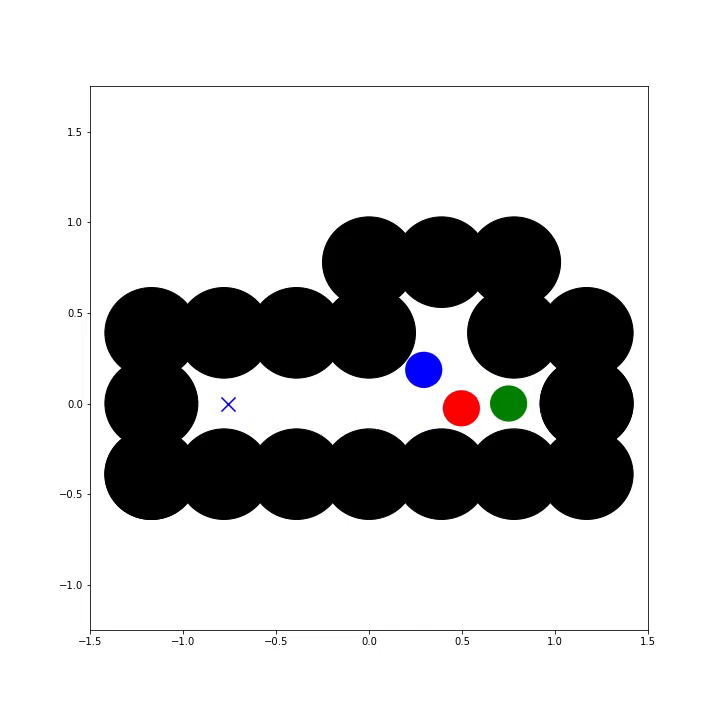}
    \caption{$t=180$} \label{fig: dubins_t180}
  \end{subfigure}
  \begin{subfigure}{0.245\textwidth}
    \includegraphics[width=\linewidth]{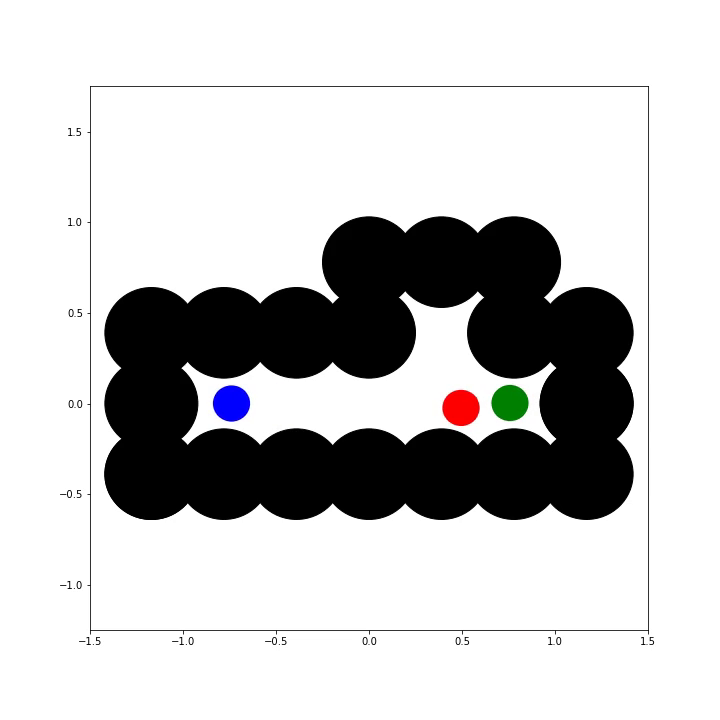}
    \caption{$t=400$} \label{fig: dubins_t400}
  \end{subfigure}
  \caption{Snapshots of the 3-agent ``narrow crossing'' task with distributed Tsallis. Targets are marked by the cross of the same color as the agents.}
  \label{fig: dubins_narrow_passage}
  \vspace{-10pt}
\end{figure}

\begin{figure}[t]
  \begin{subfigure}{0.25\textwidth}
    \includegraphics[width=\linewidth]{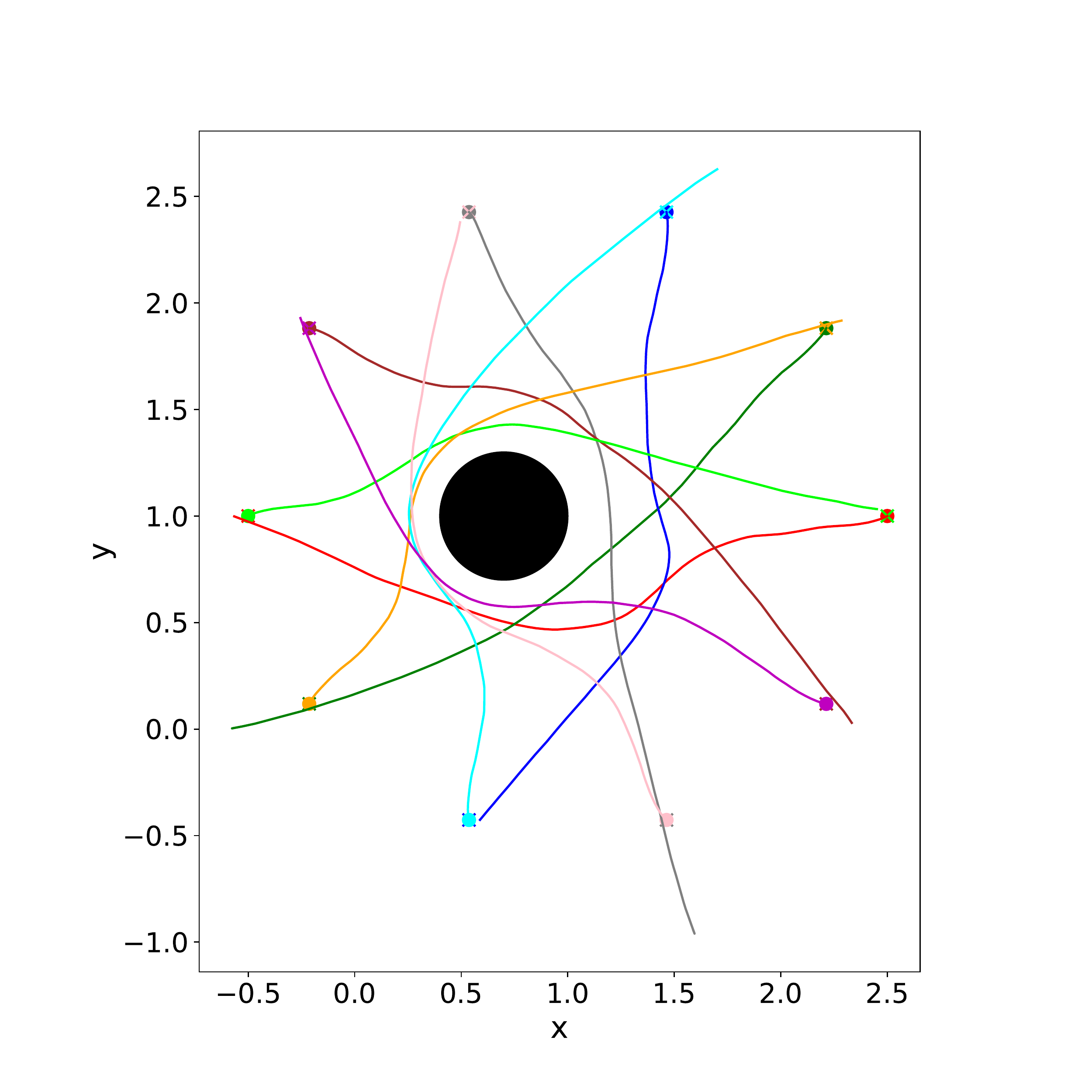}
    \caption{Centralized} \label{fig: dubins_swap_baseline}
  \end{subfigure}%
  \begin{subfigure}{0.245\textwidth}
    \includegraphics[width=\linewidth]{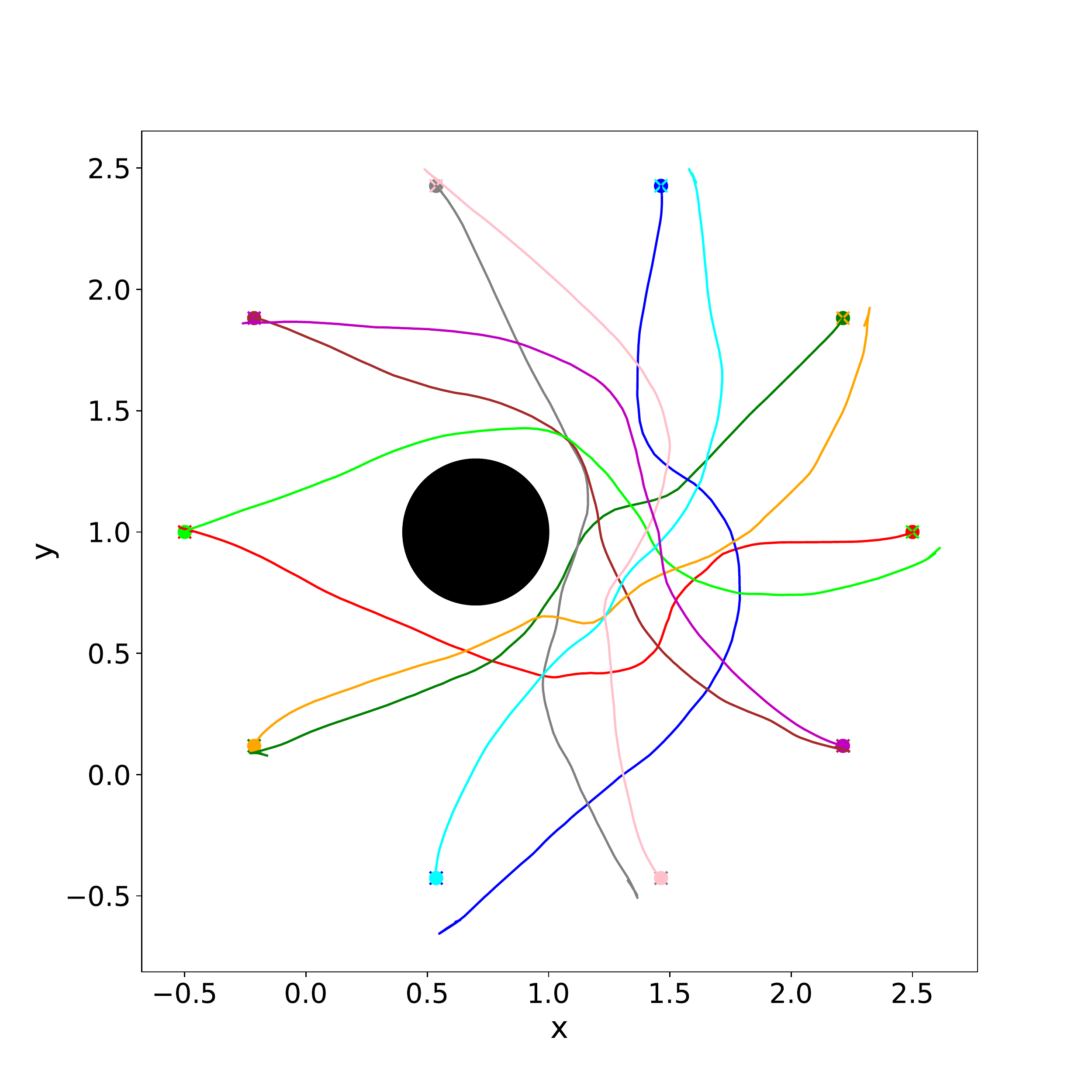}
    \caption{CEM} \label{fig: dubins_swap_cem}
  \end{subfigure}%
  \begin{subfigure}{0.245\textwidth}
    \includegraphics[width=\linewidth]{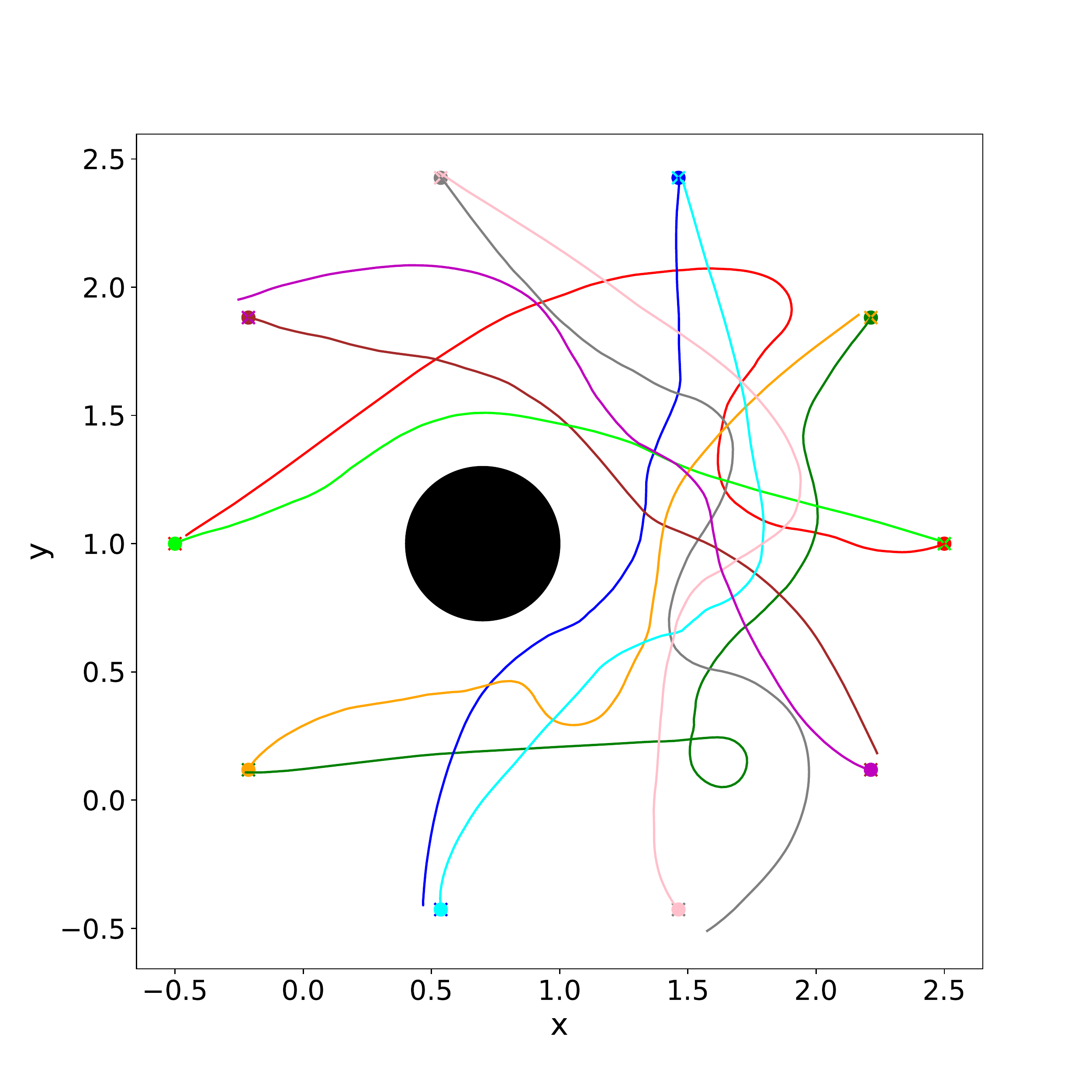}
    \caption{MPPI} \label{fig: dubins_swap_mppi}
  \end{subfigure}
  \begin{subfigure}{0.245\textwidth}
    \includegraphics[width=\linewidth]{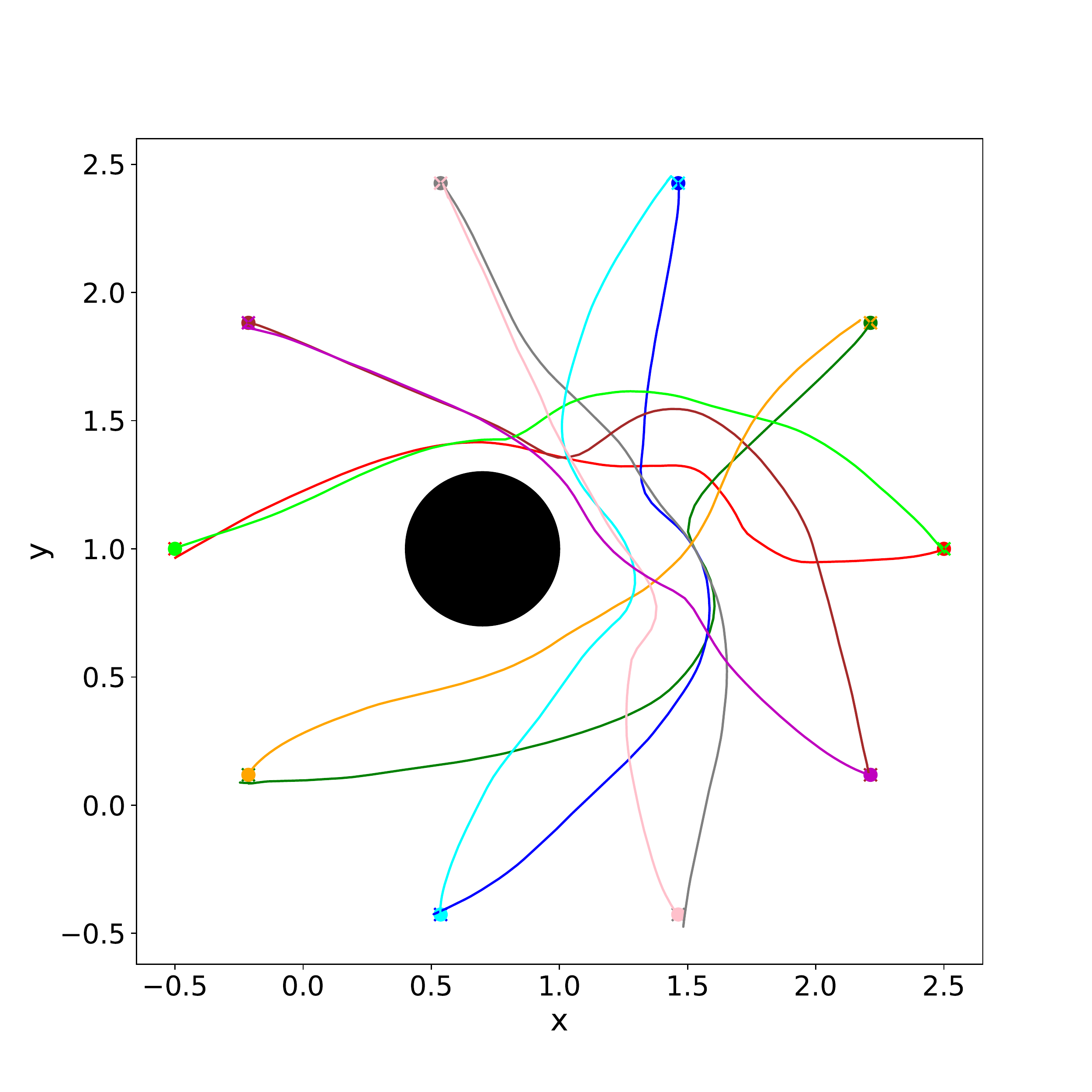}
    \caption{Tsallis} \label{fig: dubins_swap_tsallis}
  \end{subfigure}
  \caption{Trajectory comparison of the 10-agent swapping task with Dubins vehicle.}
  \label{fig: dubins_swap}
  \vspace{-10pt}
\end{figure}

\subsection{Dubins Narrow Crossing}

Subsequently, the performance of the proposed methods is tested on a more challenging task where three vehicles are in a ``narrow crossing'' whose width is such that only one vehicle can fit vertically (\Cref{fig: dubins_narrow_passage}). The only way for the vehicles to reach to their desired targets is to take advantage of a gap, which of course requires proper coordination. The resulting costs for all methods are provided in \Cref{tab:data_table_new}. In \Cref{fig: dubins_narrow_passage}, snapshots from the motion of the agents using distributed Tsallis optimizer are provided.

\begin{wrapfigure}{r}{0.45\textwidth}
\vspace{-30 pt}
    \centering
    \includegraphics[width=0.45\textwidth]{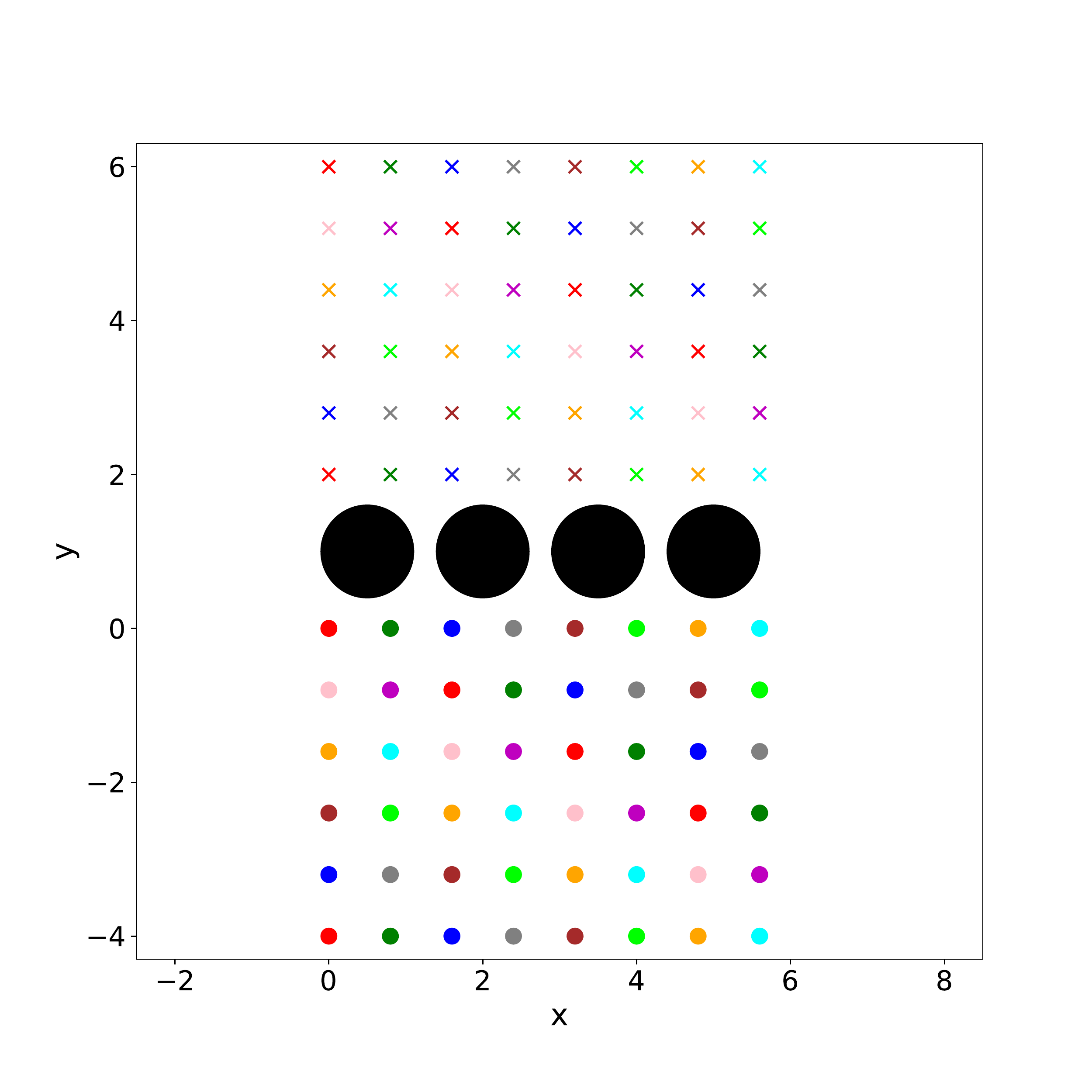}
    \vspace*{-10 mm}
    \caption{Dubins vehicle formation task with 48 agents.}
    \vspace{-20 pt}
    \label{fig:dubins_formation}
\end{wrapfigure}

\subsection{Dubins Swap}

We also perform the comparison on a 10-agent swapping task, where the agents are arranged in a circle with the goal of reaching the starting point of the agent directly across while avoiding an obstacle, as shown in \Cref{fig: dubins_swap}. The obstacle is placed off center so the optimal behavior is not circling around the obstacle as it would be in the centered obstacle case. Figures \ref{fig: dubins_swap_baseline} through \ref{fig: dubins_swap_tsallis} show that the centralized approach takes the circling maneuver, whereas the agents go through the side with more room in the distributed schemes. 


\subsection{Dubins Formation}

The distributed frameworks are then tested on a 48-agent formation task. As shown in \Cref{fig:dubins_formation}, the agents need to move through the obstacles to reach the desired formation. To increase the task difficulty, the openings between obstacles are such that only one agent can pass through at a time. 



\begin{wrapfigure}{r}{0.45\textwidth} 
\vspace{-20 pt}
    \centering
    \includegraphics[trim=70 120 70 180, clip,width=0.45\textwidth]{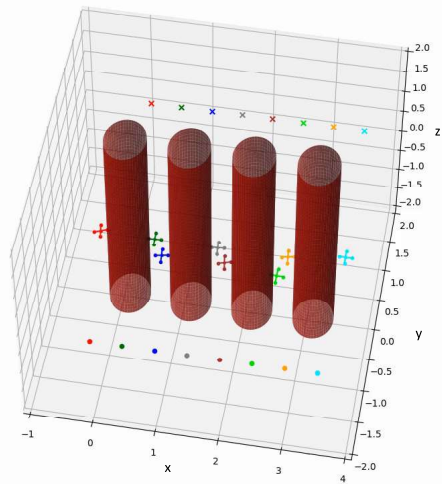}
    \vspace*{-10 mm}
    \caption{Snapshot of quadcopters formation task with distributed CEM.}
    \vspace{-10 pt}
    \label{fig:quads_formation}
\end{wrapfigure} 

\subsection{Quadcopter Formation}

We also perform a formation experiment with the more challenging quadcopter dynamics. We use the quadcopter dynamics derived in \cite{Sabatino2015QuadrotorCM}, in which the control inputs are the thrust and the three dimensional torques. The goal for the quadcopters is to travel from some initial starting points to assigned targets while avoiding collisions with each other and additional cylindrical obstacles. The obstacles are placed such that no two agents can pass together at the same elevation. The quadcopters' thrusts are also penalized such that it is not easy for agents to pass together at different elevations. This is to show how neighboring quadcopters can achieve consensus and pass between the obstacles without colliding. \Cref{fig:quads_formation} shows a snapshot of a successful and conscientious distributed CEM solution of $8$ quadcopters colored differently in which the corresponding starting points are the filled circles and the targets are the crosses. It can be seen that the neighboring agents line up to pass between the obstacles without colliding. 

\begin{figure}[t]
    \centering
    \includegraphics[width=0.9\linewidth]{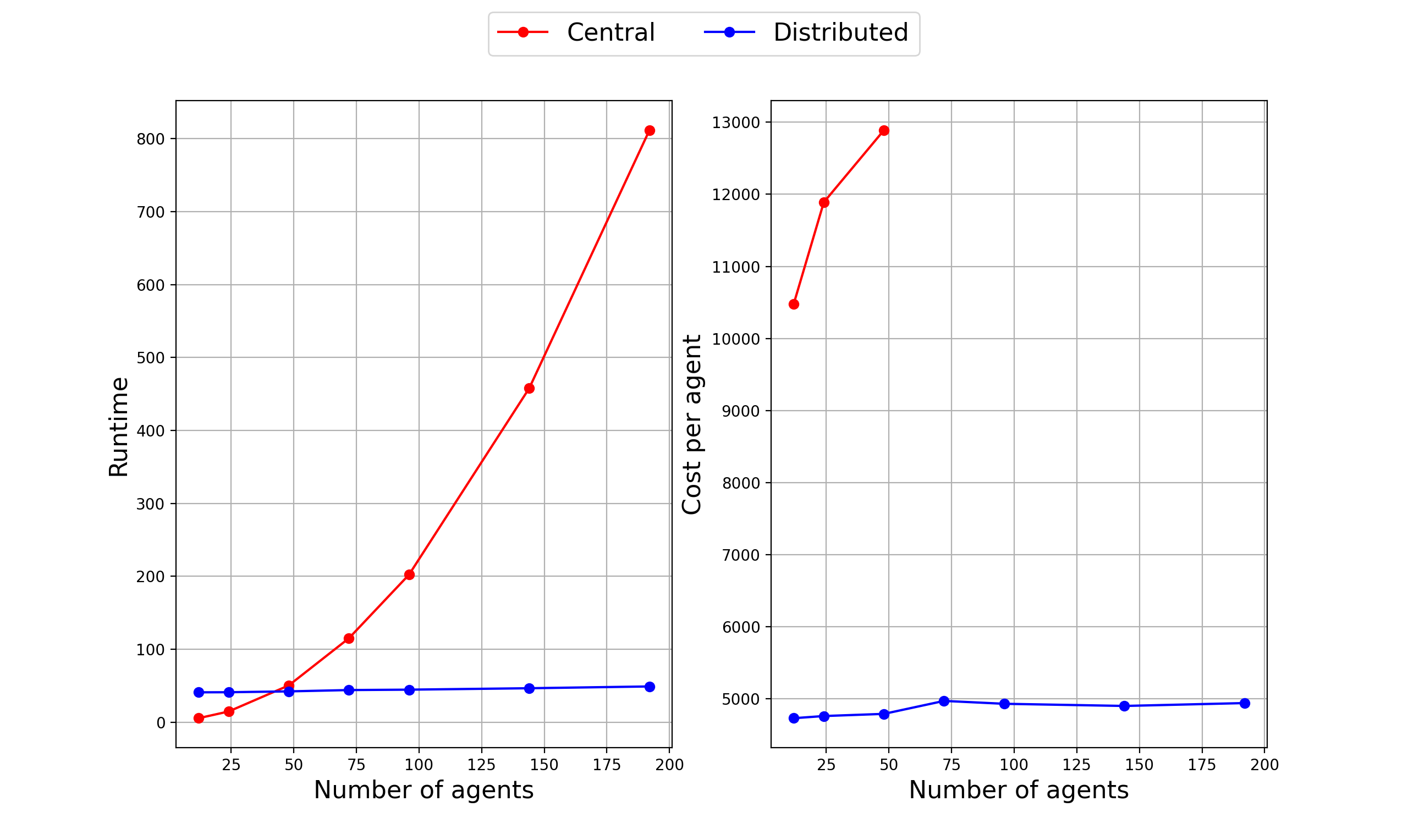}
    \caption{Scaling comparison between the Central and Distributed scheme on the Dubins formation experiment. \textit{Left:} Runtime comparison; \textit{Right:} Cost per agent comparison. The missing data points for the central scheme indicate crashing result.}
    \label{fig:scaling_comparison}
\end{figure}

\subsection{Scaling}

We test the scalability of the frameworks on 7 different Dubins formation experiments with 12, 24, 48, 72, 96, 144, 196 agents. The runtime and cost per agent are compared between the distributed and centralized schemes when using the Tsallis optimizer and a unimodal Gaussian policy in \Cref{fig:scaling_comparison}. It can be observed that the runtime of the distributed scheme stays almost constant while the runtime of the centralized one grows superlinearly. Similarly, the cost per agent also stays constant for the distributed scheme. For the centralized one, the cost per agent more than doubles that of the distributed scheme for the 12-agent experiment and grows as the number of agent increases. The high cost indicates failure of task completion, as also indicated in \Cref{fig:scaling_comparison}. For experiments with more than 48 agents, the central scheme results in crashing. The scaling experiments demonstrate the distributed scheme's capability of handling large numbers of agents.

\subsection{Discussion}
Several observations can be drawn from the results in \Cref{tab:data_table_new}. First of all, distributed \ac{CEM} or Tsallis always outperforms the centralized framework. The distributed frameworks break the problem into smaller local ones, making them easier to solve. In addition, the quadratic consensus term in \cref{eq: lagrangian} leads to better conditioned problems. The second observation is that \ac{CEM} and Tsallis have similar levels of performance and consistently achieve lower mean costs than \ac{MPPI}. This is because the cost transform of \ac{MPPI} assigns either too much weight on a single best performing sample or non-negligible weights on worst performing samples. In theory, when both frameworks are sufficiently tuned, Tsallis should perform equally well or better than \ac{CEM} since it is a generalization. In practice, we indeed observe comparable results between the two frameworks. 

Finally, the multi-modal Gaussian mixture and Stein variational policies tend to suffer from mode collapse in all experiment settings due to the presence of many agents and obstacles. Gaussian mixture policy collapses to a single mode, leading to similar performance to that of the unimodal Gaussian policy. 

\section{Conclusions}
\label{sec: conclusions}
In this paper, we systematically review three different perspectives on sampling-based dynamic optimization and provide theoretical justification to heuristics in prior works. We provide a unifying theoretical analysis on the convergence and sample complexity of the perspectives. We then introduce a distributed framework for scaling general sampling-based dynamic optimizers for multi-agent control using consensus ADMM. The proposed framework splits the multi-agent problem into smaller local problems with respect to their neighbors, resulting in a distributed MPC algorithm that can adapt the neighborhood at every timestep. The framework is tested on 4 different tasks in simulation, including a series of scaling experiments up to 192 agents for which a centralized approach inflates the computational complexity due to the number of samples needed. We compare between 3 different distributed optimizers with 3 different policy distributions and against their centralized versions.


\acks{We would like to thank professor Enlu Zhou for the helpful discussions.}


\newpage

\appendix
\section*{Appendix}
\label{app:theorem}



\section{Derivations for Different Sampling-based Optimization Perspective}
\subsection{KL VO}
\label{appendix: KL_VO}

To derive the optimal Gibbs distribution, we formulate the Lagrangian from the objective as

\begin{equation}
\calL = \mathbb{E}_{q(\tau)}[J(X)] + \lambda \KL{q(\tau)}{p(\tau)} + \beta (\int q(\tau)\rd U - 1),
\end{equation}
where the last term ensures that the distribution integrates to 1. Taking the gradient of the Lagrangian and setting it to zero we get 
\begin{align}
\nabla \calL &= J(X) + \lambda \ln q(\tau) + \lambda - \lambda \ln p(\tau) + \beta = 0\\
\Rightarrow q^*(\tau) &= \exp\left(-\frac{1}{\lambda} J(X) \right)\exp(-1-\beta)p(\tau)\\
\Rightarrow q^*(U) &= \exp\left(-\frac{1}{\lambda} J(X) \right)\exp(-1-\beta)p(U).
\label{eq: KL VO qstar}
\end{align}
Integrating both sides and equating them to 1 we get
\begin{align}
\int q^*(U) \rd U &= \exp(-1-\beta)\int \exp\left(-\frac{1}{\lambda} J(X) \right) p(U)\rd U = 1\\
\Rightarrow \exp(-1-\beta) &= \frac{1}{\Eb_{p(U)}\left[\exp\left(-\frac{1}{\lambda}J(X)\right)\right]}.
\label{eq: KL VO exp}
\end{align}
Plugging \cref{eq: KL VO exp} into \cref{eq: KL VO qstar}, we get the optimal distribution to be
\begin{equation} \label{eq: gibbs_dist}
q^* = \frac{\exp\left(-\frac{1}{\lambda}J(X)\right)p(U)}{\Eb_{p(U)}\left[\exp\left(-\frac{1}{\lambda}J(X)\right)\right]}.
\end{equation}
Nevertheless, we cannot directly sample from the optimal distribution. To derive an iterative update law, we minimize the KL divergence between the controlled and optimal distributions. 
For each of the following update laws, we make use of the following equality:
\begin{align} \label{eq:derive:kl_vo}
    q^{k+1}(U)
    &= \argmin_{q(U;\Theta)} \KL{q^*(U)}{q(U;\Theta)} \\
    &= \argmin_{q(U;\Theta)} \int q^*(U) \log q^*(U) \rd U - \int q^*(U) \log q(U;\Theta) \rd U .
\end{align}
We can drop the first term as it doesn't relate to the $\argmin_{q(U;\Theta)}$, which gives us 
\begin{align}
    q^{k+1} &= \argmin_{q(U;\Theta)}  - \int q^*(U) \log q(U;\Theta) \rd U \\
    &= \argmax_{q(U;\Theta)} \int \hat{q}^*(U) \log q(U;\Theta) \rd U \\ 
    &= \argmax_{q(U;\Theta)} \int \frac{\exp\left(-\frac{1}{\lambda}J(X)\right)p(U)}{\Eb_{p(U)}\left[\exp\left(-\frac{1}{\lambda}J(X)\right)\right]} \log q(U;\Theta) \rd U\\
    &= \argmax_{q(U;\Theta)}  \frac{\Eb_{q(U;\Theta)}\left[\frac{p(U)}{q(U;\Theta)}\exp\left(-\frac{1}{\lambda}J(X)\right)\log q(U;\Theta)\right]}{\Eb_{q(U;\Theta)}\left[\frac{p(U)}{q(U;\Theta)}\exp\left(-\frac{1}{\lambda}J(X)\right)\right]}.
\end{align}
For a Gaussian distribution with mean $\Theta$, the update law can be derived by setting the gradient of the objective with respect to $\Theta$ to be zero as follows
\begin{equation}
\Theta^{k+1} = \frac{\Eb_{q^k(U;\Theta)}\left[\exp \left(-\frac{1}{\lambda}J(\tau)\right) U \right]}{\Eb_{q^k(U;\Theta)}\left[ \exp \left(-\frac{1}{\lambda}J\right)\right]},
\end{equation}
where $J(\tau)=J(X) + J(U)$ with $J(U)=\frac{\lambda}{2}(\Theta\T\Sigma \Theta + 2U\T\Sigma\Theta)$ being the control cost resulting from the importance sampling.

\subsection{Renyi VO}
\label{appendix: Renyi_VO}

To derive the Renyi VO, we start with the optimal Gibbs distribution in \cref{eq: gibbs_dist}. Minimizing the Renyi divergence between the optimal distribution and the controlled distribution

\begin{align}
q^{k+1}&=\argmin_{q(U;\Theta)}\D{\alpha}{q^{*}(U)}{q(U;\Theta)}\\
&= \argmin_{q(U;\Theta)}\frac{1}{\alpha-1}\log \int \bigg( \frac{q^*(U)}{q(U;\Theta)} \bigg)^{\alpha-1} q^*(U) \rd U \\
&=\argmin_{q(U;\Theta)}\frac{1}{\alpha-1}\log \int \bigg( \frac{q^*(U) p(U)}{p(U) q(U;\Theta)} \bigg)^{\alpha-1}  q^*(U) \rd U \\
&= \argmin_{q(U;\Theta)} \frac{1}{\alpha-1}\log \int \bigg( \frac{q^*(U)}{ p(U)}\bigg)^{\alpha-1} \bigg(\frac{p(U)}{ q(U;\Theta)} \bigg)^{\alpha-1}  q^*(U)\rd U \\
& =\argmin_{q(U;\Theta)} \frac{1}{\alpha-1}\log \int \bigg( \frac{q^*(U)}{ p(U)}\bigg)^{\alpha} \bigg(\frac{p(U)}{ q(U;\Theta)} \bigg)^{\alpha-1}  p(U)\rd U\\
& =\argmin_{q(U;\Theta)} \frac{1}{\alpha-1}\log \int \bigg( \frac{q^*(U)}{ p(U)}\bigg)^{\alpha} \bigg(\frac{p(U)}{ q(U;\Theta)} \bigg)^\alpha  q(U;\Theta)\rd U.
\end{align}
Similar to KL VO, for a Gaussian distribution with mean $\Theta$, the update law can be derived by setting the gradient of the objective with respect to $\Theta$ to be zero as follows
\begin{equation}
\Theta^{k+1}  =  \frac{\Eb_{q^k(U;\Theta)}\left[ \left( \exp\left(-\frac{1}{\lambda} J(\tau)\right) \right)^{\alpha} U \right]}{\Eb_{q^k(U;\Theta)}\left[\left( \exp\left(-\frac{1}{\lambda} J(\tau)\right) \right)^{\alpha} \right]},
\end{equation}
where $J(\tau)$ is defined as in \Cref{appendix: KL_VO}.

\subsection{Tsallis VI}
\label{appendix: Tsallis_VI}

The optimal distribution can be derived similarly as in the VO approach through the Lagrangian
\begin{equation}
\calL = \lambda^{-1} \Eb_{q(U)}[J(\tau)] + \D{r}{q(U;\Theta)}{p(U)} + \beta(\int q(U)\rd U - 1).
\end{equation}
Setting the gradient to zero, we have
\begin{align}
\nabla \calL
&= \lambda^{-1} J + \frac{ r}{r-1}\left(\frac{q(U)}{p(U)}\right)^{r-1} - \beta
=0 \\
\Rightarrow q^*
&= \left(
    \frac{r-1}{ r}\right)^{\frac{1}{r-1}}
    \left(\beta - \lambda^{-1} J \right)^{\frac{1}{r-1}} p(U) .
\end{align}
Redefining $\lambda^{-1} = \alpha(r-1) \lambda^{-1}$ and integrating both sides to 1, we get
\begin{equation}
q^* = \frac{\exp_r\left(-\frac{1}{\lambda}J(\tau)\right)p(U)}{\Eb_{p(U)}\left[\exp_r\left(-\frac{1}{\lambda}J(\tau)\right)\right]}.
\end{equation}
Since $p(U)$ is an arbitrary prior distribution, as opposed to the uncontrolled distribution by definition in the VO approach, the update law can be derived by setting the prior distribution as the current controlled distribution $q^k$:
\begin{equation}
q^{k+1} = \frac{\exp_r\left(-\frac{1}{\lambda}J(\tau)\right)q^k(U;\Theta)}{\Eb_{q^k(U;\Theta)}\left[\exp_r\left(-\frac{1}{\lambda}J(\tau)\right)\right]}.
\end{equation}
With this the parameter update for the Gaussian distribution can be computed by taking the expectation on both sides
\begin{equation}
\Theta^{k+1} = \frac{\Eb_{q^k(U;\Theta)}\left[\exp_r\left(-\frac{1}{\lambda}J(\tau)\right)U\right]}{\Eb_{q^k(U;\Theta)}\left[\exp_r\left(-\frac{1}{\lambda}J(\tau)\right)\right]}.
\end{equation}

\subsection{SS}
\label{appendix: SS}

The stochastic search scheme transforms the original objective using a non-decreasing shape function for the algorithmic development while preserving the set of solutions. In addition, a monotonic logarithmic transform is performed on the objective to ensure that the update law is scale-free. With the assumption of the control distribution from the exponential family, the gradient can be taken with respect to the natural parameters at each time step t:
\begin{equation}
\begin{split}
\nabla_{\eta_t}  \ln\left(\mathbb{E}\left[S\left(- J(\tau)\right)\right]\right) &= \frac{\int S(-J(\tau))\nabla_{\eta_t} q(U;\Theta)\mathrm{d}U}{\int S(-J(\tau))q(U;\Theta)\mathrm{d}U} \\
&= \frac{\int S(-J(\tau))q(U;\Theta) \nabla_{\eta_t} \ln q(U;\Theta)\mathrm{d}U}{\int S(-J(\tau))q(U;\Theta)\mathrm{d}U}.
\end{split}
\end{equation}
The term $\nabla_{\eta_t} \ln q(U;\Theta)$ can be calculated as:
\begin{equation}
\begin{split}
\nabla_{\eta_t} \ln q(U;\Theta) &= \nabla_{\eta_t} \Big(\sum_{t=0}^{T-1}\big(\ln h(u_t) + \eta_t^\mathrm{T}T(u_t)-A(\theta_t) \big)\Big) \\
&= T(u_t) - \Eb[T(u_t)].
\end{split}
\end{equation}
Plugging it back into the gradient we have:
\begin{equation}
\nabla_{\eta_t} l(\theta) = \frac{\int S(-J(\tau))q(U;\Theta) (T(u_t) - \Eb[T(u_t)]) \mathrm{d}U}{\int S(-J(\tau))q(U;\Theta)\mathrm{d}U}\label{eq:gradient}.
\end{equation}
With the gradient the update rule for $\theta$ can be found as:
\begin{equation} \label{eq: ss_update}
\eta_t^{k+1} 
= \eta_t^k + \alpha^k\Bigg(\frac{\mathbb{E}\Big[S(-J(\tau)) (T(u_t) - \Eb[T(u_t)]) \Big]}{\mathbb{E} \Big[S(-J(\tau))\Big]}\Bigg).
\end{equation}
Plugging in the parameters of the Gaussian distribution with fixed variance, we get
\begin{equation}
\begin{split}
h(u_t) &= \frac{1}{\sqrt{(2\pi)^n}|\Sigma|}\exp\Big(-\frac{1}{2}u_t^\mathrm{T}\Sigma^{-1}u_t\Big), \hspace{2mm} A(\theta_t) = \frac{1}{2} \theta_t^\mathrm{T}\Sigma^{-1}\theta_t, \\
T(u_t) &= \Sigma^{-\frac{1}{2}} u_t, \hspace{2mm} \eta_t(\theta_t) = \Sigma^{-\frac{1}{2}}\theta_t.
\end{split}
\end{equation}
The parameter update can be found as:

\begin{equation}
\theta_t^{k+1}
= \theta_t^k + \frac{\mathbb{E}\Big[S(-J(\tau)) (u_t - \theta_t^k) \Big]}{\mathbb{E} \Big[S(-J(\tau))\Big]}.
\end{equation}

\section{Derivations of Different Policies}
\label{appendix: policy}

For each of the following update laws, we make use of the following equality:
\begin{align} \label{eq:derive:kl_mle}
    q^{k+1}(U)
    &= \argmin_{q} \KL{\hat{q}^*(U)}{q(U)} \\
    &= \argmin_{q} \int \hat{q}^*(U) \log \hat{q}^*(U) \rd U - \int \hat{q}^*(U) \log q(U) \rd U .
\end{align}
We can drop the first term as it doesn't relate to the $\argmin_{q}$:
\begin{align}
    &= \argmin_{q}  - \int \hat{q}^*(U) \log q(U) \rd U \\ 
    &= \argmax_{q} \int \hat{q}^*(U) \log q(U) \rd U \\ 
    &= \argmax_{q} \sum_{m=1}^M w^{(m)} \log q(U^{(m)}) .
\end{align}

\subsection{Unimodal Gaussian}
For the case of a policy class of unimodal Gaussian distributions where the controls $u_t \in \Rb^{n_u}$ of each timestep are independent, the p.d.f. for each timestep's control has the form
\begin{equation}
    \mathcal{N}(u_t; \mu_t, \Sigma_t)
    = \frac{1}{\sqrt{(2 \pi)^{n_u} \abs{\Sigma_t}}} \exp \left( -\frac{1}{2} (u_t - \mu_t)\T \Sigma_t^{-1} (u_t - \mu_t) \right),
\end{equation}
where we have that 
\begin{align}
    \nabla_{\mu_t} \log \mathcal{N}(u_t; \mu_t, \Sigma_t)
        &= (u_t - \mu_t)\T \Sigma_t^{-1} \label{eq:derive_update:unimodal:mu} \\
    \nabla_{\Sigma_t^{-1}} \log \mathcal{N}(u_t; \mu_t, \Sigma_t)
        &= \frac{1}{2} \Sigma_t - \frac{1}{2} (u_t - \mu_t)(u_t - \mu_t)\T \label{eq:derive_update:unimodal:sigma} .
\end{align}
Minimizing the KL divergence between $\pi(U)$ and $\tilde{q}^*(U)$ and using \Cref{eq:derive:kl_mle}, we obtain
\begin{align}
    \pi^{k+1}(U)
    &= \argmin_{\pi(U) \in \Pi} \KL{\tilde{q}^*(U)}{\pi(U)} \\
    &= \argmax_{\mu, \Sigma} \sum_{m=1}^M \sum_{t=0}^{T-1} w^{(m)} \log \mathcal{N}(U^{(m)}; \mu_t, \Sigma_t).
\end{align}
Hence, we have that
\begin{align}
     0 &= \sum_{m=1}^M w^{(m)} \nabla_{\mu_t} \log \mathcal{N}(U^{(m)}; \mu_t, \Sigma_t) \\
     0 &= \sum_{m=1}^M w^{(m)} \nabla_{\Sigma_t} \log \mathcal{N}(u_t^{(m)}; \mu_t, \Sigma_t).
\end{align}
Finally, using \Cref{eq:derive_update:unimodal:mu,eq:derive_update:unimodal:sigma} then results in the following update laws
\begin{align}
    \mu_t &= \sum_{m=1}^M w^{(m)} u_t^{(m)} \\
    \Sigma_t &= \sum_{m=1}^M w^{(m)} (u_t^{(m)} - \mu_t) (u_t^{(m)} - \mu_t)\T .
\end{align}

\subsection{Gaussian Mixutre}
In this case, we consider a Gaussian mixture model $\pi(U; \theta)$ with $L$ components
with parameters $\theta \coloneqq \{ \theta_l \}_{l=1}^{L}$ and $\theta_l \coloneqq \{ (\phi_l, \{ \mu_{l, t}, \Sigma_{l, t} \}_{t=0}^{T-1} )$ such that
\begin{align}
    \pi(U; \theta) &= \prod_{t=0}^{T-1} \pi_t(u_t; \theta) \\
    \pi_t(u_t; \theta) &\coloneqq \sum_{l=1}^L \pi_l \mathcal{N}(u_t; \mu_l, \Sigma_l).
\end{align}
Directly trying to minimize the KL divergence between $\pi(U; \theta)$ and $\tilde{q}^*(U)$ and using \Cref{eq:derive:kl_mle} gives us
\begin{align}
    \theta^*
    &= \argmax_\theta \sum_{m=1}^M \sum_{t=0}^{T-1} \left( w^{(m)} \log \sum_{l=1}^L \phi_l \mathcal{N}(u_t^{(m)}; \mu_{l, t}, \Sigma_{l, t}) \right). \label{eq:gmm_em_argmax}
\end{align}
However, \Cref{eq:gmm_em_argmax} does not admit a closed form solution. 

Instead, we can use the expectation maximization algorithm and introduce the latent variables $\{ Z_m \}_{m=1}^M$, $Z_m \in [L]$ which form a categorical distribution $q(Z)$ over each one of the $L$ mixtures and indicate the mixture each sample $U^{(m)}$ was sampled from.
Then, we can write $\log \pi(U; \theta)$ as
\begin{align}
    \log \pi(U; \theta) &= \int q(z) \log \pi(U; \theta) \rd z \\
    &= \int q(z) \log \frac{ \pi(U; \theta)p(z|U; \theta) }{p(z | U; \theta)} \rd z \\
    &= \int q(z) \log \frac{ p(U, z; \theta)}{p(z | U; \theta)} \rd z \\
    &= \int q(z) \log \frac{ p(U, z; \theta)}{q(z)} \rd z - \int q(z) \log \frac{p(z|U; \theta)}{q(z)} \rd z \\
    &= F(q, \theta) + \KL{q}{p} .
\end{align}
where $F(q, \theta)$ is the ELBO, and provides a lower bound for the log likelihood $\log p(U; \theta)$.
Now, to optimize $F(q, \theta)$, we estimate $q(z)$ via $p(z | U; \theta^{k})$, where $\theta^{k}$ is the previous iteration's parameters.
Then, the ELBO becomes
\begin{align}
    F(q, \theta)
    &= \int q(z) \log \frac{p(U, z; \theta)}{q(z)} \rd z \\
    &= \int q(z) \log p(U, z; \theta) \rd z - \int q(z) \log q(z) \rd z \\
    &= \int p(z | U; \theta^{k}) \log p(U, z; \theta) \rd z - \int p(z | U; \theta^k) \log p(z | U; \theta^k) \rd z \\
    &= Q(\theta, \theta^k) + H(z | U) .
\end{align}
Since the second term is a function of $\theta^k$ and thus is not dependent on $\theta$, it suffices to optimize $Q(\theta, \theta^k)$.
To do so, the EM algorithm consists of two steps:
\begin{enumerate}
    \item E-step: Compute $p(z | U; \theta^k)$.
    \item M-step: Compute $\argmax_\theta Q(\theta, \theta^k)$.
\end{enumerate}
In our case, we have that
\begin{align}
    Q(\theta, \theta^k)
    &= \int p(z | U; \theta^{k}) \log p(U, z; \theta) \rd z \\
    &= \ExP{z \sim p(z | U; \theta^k) }{ \sum_{m=1}^M \sum_{l=1}^L \sum_{t=0}^{T-1} \mathbbm{1}\{Z_n = l\} w^{(m)} \left( \log \phi_l + \log \mathcal{N}(u_t^{(m)} | \mu_{l, t}, \Sigma_{l, t}) \right) } \\
    &= \sum_{m=1}^M \sum_{l=1}^L \sum_{t=0}^{T-1} p(Z_n = l | u_t^{(m)}; \theta^k) w^{(m)} \left( \log \phi_l + \log \mathcal{N}(u_t^{(m)} | \mu_{l, t}, \Sigma_{l, t}) \right) \\
    &= \sum_{m=1}^M \sum_{l=1}^L \sum_{t=0}^{T-1} \eta_l(u_t^{(m)}) w^{(m)} \left( \log \phi_l + \log \mathcal{N}(u_t^{(m)} | \mu_{l, t}, \Sigma_{l, t}) \right).
\end{align}
where we have defined $\eta_l(u_t^{(m)}) \coloneqq p(Z_n = l | u_t^{(m)}; \theta^{k})$ for simplicity.
To compute this, note that
\begin{align}
    p(Z_k = l| u_t^{(m)}; \theta^k)
    &= \frac{p(u | Z_n = l; \theta^k) p(Z_n = l; \theta^{k})}{\sum_{l'=1}^L p(u | Z_n = l'; \theta^{k}) p(Z_n = l'; \theta^{k})} \\
    &= \frac{ \phi_l \mathcal{N}(u_t^{(m)}; \mu_l^{k}, \Sigma_l^{k}) }{\sum_{l'=1}^L \phi_l \mathcal{N}(u^{(m)}; \mu_{l'}^{k}, \Sigma_{l'}^{k}) } .
\end{align}
To solve for the optimal $\theta^*$ in the M-step, we apply the first order conditions to $\mu_{l, t}$ and $\Sigma_{l, t}$ to obtain
\begin{align}
    0 &= \sum_{m=1}^M \eta_l(u_t^{(m)}) w^{(m)} (u_t^{(m)} - \mu_{l, t}^{k+1}) \Sigma_{l, t}^{-1} \\
    \implies \mu_{l, t}^{k+1} &= \frac{1}{N_{l, t}} \sum_{m=1}^M \eta_l(u_t^{(m)}) w^{(m)} u_t^{(m)},
\end{align}
and
\begin{align}
    0 &= \sum_{m=1}^M \eta_l(u_t^{(m)}) w^{(m)} \left( \frac{1}{2} \Sigma_{l, t}^{k+1} - \frac{1}{2} (u_{l, t} - \mu_{l, t}^{k+1})(u_{l, t} - \mu_{l, t}^{k+1} )\T \right) \\
    \implies \Sigma_{l, t}^{k+1} &= \frac{1}{N_{l, t}} \sum_{m=1}^M \eta_l(u_t^{(m)}) w^{(m)} (u_t^{(m)} - \mu_{l, t}^{k+1}) (u_t^{(m)} - \mu_{l, t}^{k+1})\T , 
\end{align}
where we have defined $N_{l, t} \coloneqq \sum_{m=1}^M \eta_l(u_t^{(m)}) w^{(m)}$ for convenience.
Since $\phi_l^*$ includes the constraint $\sum_{l=1}^L \phi_l^* = 1$, we solve for $\phi_l^*$ by applying the Lagrangian multiplier $\lambda$, yielding the following Lagrangian:
\begin{equation}
    L_t =
    \sum_{m=1}^M \sum_{l=1}^L \sum_{t=0}^{T-1} \eta_l(u_t^{(m)}) w^{(m)} \big( \log \phi_l + \log \mathcal{N}(u_t^{(m)} | \mu_{l, t}, \Sigma_{l, t} \big) + \lambda \left(1 - \sum_{l=1}^L \phi_l \right) .
\end{equation}
Applying first order conditions for $\phi_l$ gives
\begin{align}
    0 &= \sum_{m=1}^M \sum_{t=0}^{T-1} \eta_l(u_t^{(m)}) w^{(m)} \frac{1}{\phi_l^{k+1}} - \lambda \\
    \implies \phi_l^{k+1} &= \frac{1}{\lambda} \sum_{m=1}^M \sum_{t=0}^{T-1} \eta_l(u_t^{(m)}) w^{(m)} \\
                   &= \frac{1}{\lambda} \sum_{t=0}^{T-1} N_{l, t} .
\end{align}
To solve for the Lagrange multiplier $\lambda$, we plug in the constraint to get
\begin{equation}
    \lambda = \sum_{l=1}^L \sum_{m=1}^M \sum_{t=0}^{T-1} \eta_l(u_t^{(m)}) w^{(m)} = \sum_{l=1}^L \sum_{t=1}^{T-1} N_{l, t} .
\end{equation}
Hence, we finally get that 
\begin{equation}
    \phi_l = \frac{N_l}{\sum_{l'=1}^L N_{l'}},
\end{equation}
where we have defined $N_l \coloneqq \sum_{t=0}^{T-1} N_{l, t}$.

\subsection{Stein Variational Policy}
Finally, we derive the update law for the choice of a fully non-parametric policy via SVGD.
SVGD solves the variational inference problem
\begin{equation}
    \pi^* = \argmin_{\pi \in \Pi} \{ \KL{\pi(U)}{q^*(U)} \} ,
\end{equation}
over the set $\Pi \coloneqq \{ z | z = T(x) \}$ consisting of all distributions obtained by smooth transforms $T$ of random variables $x$, where $x$ is drawn from some tractable reference distribution.

One can show that the direction of steepest descent $\phi^*$ which maximizes the negative gradient $-\nabla_\epsilon \KL{ q_{[T]} }{ p } \vert_{\epsilon=0}$ in zero-centered balls in the RKHS $\mathcal{H}^d$ has the form
\begin{align}
    \phi^*(\cdot) = \Eb_{U \sim \pi} \left[ k(U, \cdot) \nabla_U \log q^*(U) + \nabla_U k(U, \cdot) \right] .
\end{align}
Hence, one can then compute the optimal distribution $\pi^*$ by iteratively applying $\phi^*$ to some initial distribution $p_0$.
However, given that we only have an empirical approximation $\tilde{q}^*$ of the optimal distribution $q^*$, the gradient of $\log q^*$ cannot be easily computed.
To solve this, following \cite{lambert2020stein}, instead of optimizing directly over the distribution of the \textit{controls} $U$,
one can instead optimize over some distribution $g$ of \textit{parameters} $\theta$ of a parametrized policy $\hat{\pi}(U; \theta)$, such that
\begin{equation}
    \pi(U) = \Eb_{\theta \sim g} \left[\hat{\pi}_\theta(U) \right] \quad \text{and} \quad q^*(\theta) = \Eb_{U \sim \hat{\pi}(\cdot; \theta)} \left[ q^*(U) \right] .
\end{equation}
By doing so, we obtain the following new \ac{VI} problem and steepest descent direction respectively:
\begin{align}
    g^* &= \argmin_{\theta \in \Theta} \{ \KL{ g(\theta) }{ q^*(\theta) } \} \\
    \hat{\phi}^*(\cdot) &= \Eb_{\theta \sim g} \Big[ \hat{k}(\theta, \cdot) \nabla_\theta \log \Eb_{U \sim \hat{\pi}(\cdot; \theta)} [q^*(U)] + \nabla_\theta \hat{k}(\theta, \cdot) \Big] ,
\end{align}
As a result, we can now take the gradient of $\log \Eb_{U \sim \hat{\pi}(\cdot; \theta)} [q^*(U)]$ as
\begin{align}
    \nabla_\theta \log \Eb_{U \sim \hat{\pi}(\cdot; \theta)} [q^*(U)]
    &= \frac{\nabla_\theta \Eb_{U \sim \hat{\pi}(\cdot; \theta)} [q^*(U)]}{\Eb_{U \sim \hat{\pi}(\cdot; \theta)} [q^*(U)]} \\
    &= \frac{\int q^*(U) \nabla_\theta \hat{\pi}(U; \theta) \, \rd U }{\Eb_{U \sim \hat{\pi}(\cdot; \theta)} [q^*(U)]} \\
    &= \frac{\Eb_{U \sim \hat{\pi}(\cdot; \theta)} [q^*(U) \nabla_\theta \log \hat{\pi}(U; \theta)]}{\Eb_{U \sim \hat{\pi}(\cdot; \theta)} [q^*(U)]} .
\end{align}
Discretizing the above since we only have a discrete approximation $\tilde{q}^*$ of $q^*$, we get that
\begin{equation}
    \nabla_\theta \log \Eb_{U \sim \hat{\pi}(\cdot; \theta)} [q^*(U)]
    = \frac{\sum_{m=1}^M \tilde{q}^*(U^{(m)}) \nabla_\theta \log \hat{\pi}(U^{(m)}; \theta) }{\sum_{m=1}^M \tilde{q}^*(U^{(m)}) } .
\end{equation}
Hence, by letting $g$ be an empirical distribution $g(\theta) = \sum_{l=1}^L \bm{1}_{\{\theta_l\}}$, we obtain the following update rule for the particles $\{\theta_l\}_{l=1}^L$:
\begin{equation}
    \theta_l^{k+1} = \theta_l^k + \hat{\phi}^*(\theta_l^k) .
\end{equation}

\vskip 0.2in
\bibliography{reference}

\end{document}